\font\tenronde=rsfs10
\font\sevenronde=rsfs7
\font\fiveronde=rsfs5
\newcommand\br{\begin{rem}}
\newcommand\er{\end{rem}}
\newcommand\bp{\begin{pmatrix}}
\newcommand\ep{\end{pmatrix}}
\newcommand\be{\begin{equation}}
\newcommand\ee{\end{equation}}
\newcommand\ba{\begin{equation}\begin{aligned}}
\newcommand\ea{\end{aligned}\end{equation}}
\newcommand\kernel{\hbox{\rm Ker}}
\newcommand{\csLd}{{\Psi^\eps(\tilde U)}}
\newcommand{\sgn}{{\text{\rm sgn}}}
\newcommand{\R}{\Re}
\newcommand{\e}{\epsilon}
\newcommand\s{\sigma}
\def\g{\gamma}
\newcommand{\RR}{{\mathbb R}}
\newcommand\cU{{\cal  U}}
\newcommand\cV{{\cal  V}}
\newcommand\cR{{\cal  R}}
\newcommand\cN{{\cal  N}}
\newcommand\cS{{\mathcal S}}
\newcommand{\mez}{{\frac{1}{2}}}
\def\eps{\varepsilon }
\def\D{\partial }
\newcommand\adots{\mathinner{\mkern2mu\raise1pt\hbox{.}
\mkern3mu\raise4pt\hbox{.}\mkern1mu\raise7pt\hbox{.}}}
\newcommand{\Id}{{\rm Id }}
\newcommand{\re}{{\rm Re }\, }
\newtheorem{theo}{Theorem}[section]
\newtheorem{prop}[theo]{Proposition}
\newtheorem{cor}[theo]{Corollary}
\newtheorem{lem}[theo]{Lemma}
\newtheorem{ass}[theo]{Assumption}
\newtheorem{rem}[theo]{Remark}
\numberwithin{equation}{section}
\begin{document}

\title { Existence of quasilinear relaxation shock profiles}

\author{\sc \small Guy M\'etivier\thanks{
IMB, Universit\'e de Bordeaux, CNRS, IMB, 
33405 Talence Cedex, France; metivier@math.u-bordeaux.fr.:
G.M. thanks Indiana University for its hospitality during a
visit in which this work was partly carried out,
},
Benjamin Texier \thanks{
Universit\'e Paris Diderot (Paris 7), Institut de Math\'ematiques de Jussieu, UMR CNRS 7586;
texier@math.jussieu.fr:
Research of B.T.  was partially supported
under NSF grant number DMS-0505780.
},
Kevin Zumbrun\thanks{Indiana University, Bloomington, IN 47405;
kzumbrun@indiana.edu:
K.Z. thanks the University of Bordeaux I
for its hospitality during the visit in which
this work was carried out.
Research of K.Z. was partially supported
under NSF grants number DMS-0070765 and DMS-0300487.
 }}


\maketitle

\begin{abstract}
We establish existence with sharp rates of decay and distance
from the Chapman--Enskog approximation
of small-amplitude quasilinear relaxation
shocks in the general case that the profile ODE may become degenerate.
Our method of analysis follows the general approach 
used by M\'etivier and Zumbrun in the semilinear case,
based on Chapman--Enskog expansion and
the macro--micro decomposition of Liu and Yu.
In the quasilinear case, however, 
we find it necessary to apply a parameter-dependent
Nash-Moser iteration to close the analysis,
whereas, in the semilinear case, 
a simple contraction-mapping argument sufficed.
\end{abstract}

\tableofcontents


\section{Introduction}\label{intro}

We consider the problem of existence of relaxation profiles 
\begin{equation}\label{relaxprof}
U(x,t)=\bar U(x-st), \quad \lim_{z\to \pm \infty}\bar U(z)=U_\pm
\end{equation}
of a general relaxation system 
\begin{equation}\label{grelax}
U_t +A(U)U_x= Q(U), 
\end{equation}
\begin{equation}\label{block}
\quad U= \begin{pmatrix} u\\v \end{pmatrix},
\quad A= \begin{pmatrix} A_{11}& A_{12}\\A_{21}& A_{22}\end{pmatrix},
\quad Q=\begin{pmatrix} 0\\q\end{pmatrix},
\end{equation}
in one spatial dimension, $u\in \RR^n$, $v\in \RR^r$, where, 
for some smooth $v_*$ and $f$,
\begin{equation}\label{qassum1}
q(u,v_*(u))\equiv 0, \quad  
\Re \sigma (\partial_v q(u,v_*(u)))\le -\theta,\; \theta>0,
\end{equation}
$\sigma(\cdot )$ denoting spectrum, and
\begin{equation}\label{Aform}
\quad \begin{pmatrix} A_{11}& A_{12}\end{pmatrix}= 
\begin{pmatrix} \partial_u f& \partial_v f\end{pmatrix}.
\end{equation}
Here, we are thinking particularly of the case $n$ bounded
and  $r \gg 1$ arising through discretization or moment closure
approximation of the Boltzmann equation or other kinetic models;
that is, we seek estimates and proof independent of the dimension
of $v$.

For fixed $n$, $r$, the existence problem has been treated in
\cite{YZ, MaZ1} under the additional assumption 
\begin{equation}\label{nondeg}
\det (A-sI)\ne 0
\end{equation}
corresponding to nondegeneracy of the traveling-wave ODE.
However, as pointed out in \cite{MaZ2,MaZ3}, this assumption is
unrealistic for large models, and in particular is not satisfied for
the Boltzmann equations, for which the eigenvalues of $A$ are
constant particle speeds of all values, hence cannot be uniformly
satisfied for discrete velocity or moment closure approximations.
Our goal here, therefore, is to revisit the existence problem
without the assumption \eqref{nondeg}.

The latter problem was treated in \cite{MZ2} for the semilinear case,
which includes discrete velocity approximations of Boltzmann's equations,
and for Boltzmann's equation (semilinear but infinite-dimensional) 
in \cite{MZ3}.
We mention also the proof, by similar methods, 
of positivity of Boltzmann shock profiles in \cite{LY} 
and the original proof, by different methods,
of existence of Boltzmann profiles in \cite{CN}.
The new application here is to moment closure approximations of Boltzmann's
and other kinetic equations, which are in general quasilinear.

Our main result is to show existence with sharp rates of decay and distance
from the Chapman--Enskog approximation of small-amplitude quasilinear relaxation
shocks in the general case that the profile ODE may become degenerate.
See Sections \ref{model} and \ref{CEapprox} for model assumptions and 
description of the Chapman--Enskog approximation, and 
Section \ref{results} for a statement of the main theorem.
Our method of analysis, as in \cite{MZ2,MZ3} is based on Chapman--Enskog
expansion and  the macro-micro decomposition of \cite{LY}.
The main difference in this analysis from those of the previous works
is that, due to a subtle loss of derivatives,
{\it in the quasilinear case, we find it necessary to apply Nash-Moser
iteration to close the analysis}, whereas in 
the semilinear case a simple contraction-mapping argument sufficed.\footnote{
See Remark \ref{whynm} for further discussion of this point.
}
Indeed, we require a nonstandard, 
parameter-dependent, Nash--Moser iteration scheme,
indexed by amplitude $\eps\to 0$, for which the linear solution operator
loses not only derivatives but powers of $\eps$.
In this, we make convenient use of a general scheme developed in \cite{TZ} for
the treatment of such problems, which also arise in certain 
weakly nonlinear optics problems involving oscillatory solutions
with large amplitudes or times of existence.

We note that spectral stability has been shown for general
small-amplitude quasilinear relaxation profiles in \cite{MaZ3}, without the
assumption \eqref{nondeg}, under the assumption that the profile
exist and satisfy exponential bounds like those of the viscous case.
The results obtained here verify that assumption, completing the 
analysis of \cite{MaZ3}.
Existence results in the absence of condition
\eqref{nondeg} have been obtained in special cases in \cite{MaZ4,DY}
by quite different methods
(for example, 
center-manifold expansion near an assumed single degenerate point \cite{DY}).
However, the decay bounds as stated,
though exponential, are not sufficiently
sharp with respect to $\eps$ for the needs of \cite{MaZ3}.

\section{Model, assumptions, and the reduced system}\label{model}

Taking without loss of generality $s=0$, we study the traveling-wave
ODE
\begin{equation}\label{relax}
A(U)U'= Q(U),
\end{equation}
\begin{equation}\label{relaxform}
\quad  U=\begin{pmatrix} u\\v \end{pmatrix},
\quad A=\begin{pmatrix} \partial_u f(u,v)& \partial_v f(u,v)\\A_{21}(u,v)
&A_{22}(u,v)\end{pmatrix},
\quad Q=\begin{pmatrix} 0\\q(u,v)\end{pmatrix}
\end{equation}
governing solutions \eqref{relaxprof}, where
\begin{equation}\label{qassum}
q(u,v_*(u))\equiv 0, \quad  \Re \sigma(\partial_v q(u,v_*(u)))
\le -\theta,\; \theta>0.
\end{equation}
We make the standard assumption of {\it symmetric--dissipativity}
\cite{Y}:

\begin{ass}\label{SS}
(SD) \quad  There exists a smooth,  symmetric and uniformly positive
definite matrix $S(U)$ such that 

\quad i)  for all $U$, $S(U)A$ is symmetric, 

\quad ii) for all equilibria $ U_*= (u, v_*(u))$,
$\Re S\,dQ(U_*)$ is nonpositive with
\begin{equation}\label{rank}
\dim \ker \Re SdQ=\dim \ker dQ \equiv n.
\end{equation}
\end{ass}

In \eqref{rank} and below, $\Re M$ denotes symmetric part of the matrix $M,$ i.e. $\frac{1}{2}(M + M^*).$ 

By the change of coordinates $v\to v-v_*(u,v)$, we may take
without loss of generality
\begin{equation}\label{zerov}
v_*(u,v)\equiv 0,
\quad
dQ=\begin{pmatrix} 0 & 0\\ 0 & \partial_v q\end{pmatrix}
\end{equation}
without changing either the assumed structure \eqref{grelax}, \eqref{relax}
or (since it is coordinate-independent) the property of symmetrizability.
Note that symmetry of $SdQ$, together with \eqref{rank}, 
then implies both block-diagonal structure
\begin{equation}\label{Sdi}
S=\begin{pmatrix} S_{11}& 0\\ 0 & S_{22}\end{pmatrix}
\end{equation}
and definiteness and proper rank of $\Re S_{22}\partial_v q$.
Likewise, symmetry of $SA$ together with \eqref{Sdi} yields
symmetry of $S_{11}A_{11}$ and $S_{22}A_{22}$ as well as
\begin{equation}\label{keyrel}
(S_{11} A_{12})^T= S_{22}A_{21}.
\end{equation}
We make the simplifying assumption \eqref{zerov} throughout the paper.

We make also the Kawashima assumption of {\it genuine coupling} \cite{K}:
 
 \begin{ass} \label{GC}
 \label{assGC}
(GC) \quad  For all equilibria $ U_*= (u, v_*(u))$,  there exists no eigenvector of 
$A$ in the kernel of $dQ(U_*)$.
Equivalently, given Assumption \ref{SS} (see \cite{K}), 
there exists in a neighborhood $\cN$ of the
equilibrium manifold a skew symmetric $K=K(U)$ such that 
\begin{equation}\label{K}
\Re (KA-SdQ)(U) \ge \theta>0
\end{equation}
for all $U\in \cN$.
\end{ass}

Recall \cite{Y} that the reduced, Navier--Stokes type equations
obtained by Chapman--Enskog expansions are
\begin{equation}\label{ce} 
f_*(u)'= (b_*(u)u')',
\end{equation}
where, under the simplifying assumption \eqref{zerov},
\begin{equation}\label{cevalues} 
\begin{aligned}
f_*(u)&:=f(u,0),\\
b_*(u)u'&:= -A_{12}\partial_v q^{-1}A_{21} (u,0).
\end{aligned}
\end{equation}
For the reduced system \eqref{ce}, 
symmetric--dissipativity becomes:
\medbreak
(sd)\quad
There exists $s(u)$ symmetric positive
definite such that $s\, df_*$ is symmetric and $sb_*$ is
symmetric positive semidefinite, with 
$\dim \ker \Re sb_*=\dim \ker b_* $.
\medbreak
\noindent 
We have likewise a notion of genuine coupling \cite{K}:

\medbreak
(gc)\quad 
There is no eigenvector of $df_*$ in $\ker b_*$.
\medbreak

We note first the following important observation of \cite{Y}.

\begin{prop}[\cite{Y}]\label{redconds}
Let \eqref{relax} as described above be a symmetric--dissipative
system satisfying the genuine coupling condition (GC).
Then, the reduced system \eqref{ce} is a symmetric--dissipative
system satisfying genuine coupling condition (gc).
\end{prop}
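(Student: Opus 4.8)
The plan is to build an explicit symmetrizer for the reduced system \eqref{ce} out of the symmetrizer $S$ of the full system \eqref{relax}, and then verify (sd) and (gc) by hand, using only the block identities \eqref{Sdi} and \eqref{keyrel} and the fact — recorded just after \eqref{zerov}, and made quantitative by the rank clause \eqref{rank} — that $\Re S_{22}\partial_v q$ is negative definite at equilibria (not merely nonpositive). The candidate is $s(u):=S_{11}(u,0)$, which is symmetric positive definite by \eqref{Sdi}. First I would check the algebraic parts of (sd): since $df_*(u)=\partial_u f(u,0)=A_{11}(u,0)$, symmetry of $s\,df_*$ is exactly symmetry of $S_{11}A_{11}$, which holds by (SD). For the viscosity term, substitute $A_{21}=S_{22}^{-1}(S_{11}A_{12})^{T}$ from \eqref{keyrel} into $b_*=-A_{12}(\partial_v q)^{-1}A_{21}$ to get, at $(u,0)$,
\[
s\,b_* \;=\; S_{11}b_* \;=\; -\,M\,N^{-1}M^{T}, \qquad M:=S_{11}A_{12},\quad N:=S_{22}\partial_v q ,
\]
so that $\Re(s\,b_*)=-M\,\Re(N^{-1})\,M^{T}$. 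The identity $\Re(N^{-1})=N^{-1}(\Re N)(N^{-1})^{T}$ together with $\Re N<0$ and invertibility of $N$ gives $-\Re(N^{-1})>0$, whence $\Re(s\,b_*)=M(-\Re N^{-1})M^{T}$ is symmetric positive semidefinite; hence so is $s\,b_*$, as (sd) requires.

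Next I would establish the rank clause of (sd) and, at the same time, the kernel identification needed for (gc). Since $-\Re(N^{-1})$ is positive definite, $w^{T}\Re(s\,b_*)w=(M^{T}w)^{T}(-\Re N^{-1})(M^{T}w)$ vanishes iff $M^{T}w=0$, so $\ker\Re(s\,b_*)=\ker M^{T}$; and from the scalar identity $w^{T}(s\,b_*)w=w^{T}\Re(s\,b_*)w$ one gets $\ker(s\,b_*)=\ker M^{T}$ as well, while $\ker b_*=\ker(s\,b_*)$ because $s=S_{11}$ is invertible. Thus $\dim\ker\Re s b_*=\dim\ker b_*$, completing (sd), and moreover $\ker b_*=\ker M^{T}=\ker(A_{12}^{T}S_{11})=\ker(S_{22}A_{21})=\ker A_{21}$ at $U_*=(u,0)$, using \eqref{keyrel} and invertibility of $S_{22}$. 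For (gc) I would argue by contradiction: if $w\neq0$ with $df_*(u)w=\lambda w$ and $b_* w=0$, the kernel identity forces $A_{21}(U_*)w=0$, so $W:=(w,0)^{T}$ satisfies $A(U_*)W=(A_{11}w,\,A_{21}w)^{T}=\lambda W$ (using $A_{11}(U_*)=df_*(u)$) and $dQ(U_*)W=0$ by \eqref{zerov}. Then $W\neq0$ is an eigenvector of $A(U_*)$ lying in $\ker dQ(U_*)$, contradicting (GC); hence (gc) holds.

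I do not expect a genuine obstacle here — the computations are short linear algebra. If anything is the heart of the statement, it is the last step, where the specific Chapman--Enskog form $b_*=-A_{12}(\partial_v q)^{-1}A_{21}$ and the symmetrizer relation \eqref{keyrel} conspire to make $\ker b_*=\ker A_{21}$, which is precisely what lets a degenerate eigenvector of $df_*$ be lifted to a degenerate eigenvector of the full matrix $A$ inside $\ker dQ$; everything else is bookkeeping with the block structure \eqref{Sdi} and the identity for $\Re(N^{-1})$.
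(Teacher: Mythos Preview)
Your proof is correct and follows essentially the same route as the paper: take $s=S_{11}$, rewrite $s\,b_*$ as $-M N^{-1}M^{T}$ with $M=S_{11}A_{12}$ and $N=S_{22}\partial_v q$ via \eqref{keyrel}, use definiteness of $\Re N$ to get semidefiniteness and the kernel identity $\ker b_*=\ker A_{21}$, and then lift a hypothetical bad eigenvector of $df_*$ to one of $A$ in $\ker dQ$. The paper's proof is simply a terse version of exactly this computation.
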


\begin{proof}
Assuming without loss of generality \eqref{zerov},
we find that $s=S_{11}$ is a symmetrizer,
since $sdf_*=S_{11}A_{11}$ is symmetric as already observed, 
and $sb_*= -S_{11}A_{12}(S_{22}\partial_v q)^{-1}S_{22}A_{21}$
is definite with proper rank by the corresponding properties of
$S_{22}\partial_v q$ together with \eqref{keyrel}. 
Computing that (gc) is the condition that no eigenvector
of $A_{11}$ lie in $\ker A_{21}$, we see that (GC) and (gc)
are equivalent.
\end{proof}

Besides the basic properties guaranteed by Lemma \ref{redconds},
we assume that the reduced system satisfy the following important
additional conditions.

\begin{ass}\label{goodred}
\textup{
(i) The matrix $b_*(u)$ has constant left kernel, with
associated eigenprojector $\pi_*$ onto $\ker b_*$, and
(ii) The matrix $a_*:=\pi_* df_* \pi_*(u)|_{\ker b_*}$ is uniformly
invertible.
}
\end{ass}

Assumption \ref{goodred} 
ensures that the zero-speed profile problem for the reduced system,
\begin{equation}\label{vprof}
f_*(u)'=(b_*(u)u')', \quad \lim_{z\to \pm \infty} u(z)=u_\pm 
\end{equation}
or, after integration from $-\infty$ to $x$,
\begin{equation}\label{intvprof}
b_*(u)u'= f_*(u)-f_*(u_\pm),
\end{equation}
may be expressed as a nondegenerate ODE in $u_2$, coordinatizing
$u=(u_1,u_2)$ with $u_1=\pi_*u$ and $u_2=(I-\pi_*)u$
\cite{MaZ3,Z1,GMWZ}. 
Next, we assume that the classical theory of    weak shocks 
can be applied to  \eqref{vprof}, assuming that the flux $f_*$ has a  genuinely nonlinear 
eigenvalue near $0$:

\begin{ass}\label{profass}
 
In a neighborhood $\cU_*$ of   a given base state $u_0$,  
$df_*$ has a simple eigenvalue $\alpha$ near zero, with $\alpha (u_0) = 0$, and such that the
associated hyperbolic characteristic field is genuinely 
nonlinear, i.e., after a choice of orientation, $\nabla \alpha \cdot r(u_0) <  0$, where
$r$ denotes the eigendirection associated with $\alpha$.
 
\end{ass}

\begin{rem}
\textup{Assumption \ref{profass} is standard,
and is satisfied in particular for the compressible
Navier--Stokes equations resulting from Chapman--Enskog approximation
of the Boltzmann equation. 
Assumptions \ref{SS} and \ref{GC} are verified in \cite{Y} 
for a wide variety of discrete kinetic models.\footnote{
For example, both discrete kinetic models \cite{PI}
used to approximate the Boltzmann equation \cite{PI} 
and BGK models \cite{JX,N} used to approximate general
hyperbolic conservation laws; see pp. 289--294 \cite{Y}.  
Note for each of these examples that the symmetrizer $S$ is
not constant, but depends nontrivially on $U$.}
Assumptions \ref{goodred} and \ref{profass} on the reduced equations
must be checked in individual cases.
}
\end{rem}

\section{Chapman--Enskog approximation} \label{CEapprox}

Integrating the first equation of \eqref{relax} and
noting that $f(u,v)_\pm=f_*(u_\pm)$, we obtain
\begin{equation}\label{intprof}
\begin{aligned}
f(u,v)&=  f_*(u\pm),\\
A_{21}(u,v)u'+ A_{22}(u,v)v'&=q(u,v).
\end{aligned}
\end{equation}

Taylor expanding the first equation, we obtain
$$
f(u,0)+ f_v(u,0)v + O(v^2)= f_*(u_\pm),
$$
or
\begin{equation}\label{T1}
f_*(u) + f_v(u,0)v + O(v^2)= f_*(u_\pm).
\end{equation}
Taylor expanding the second equation, we obtain
$$
A_{21}(u,0)u' + O(|v||u'|)+ O(|v'|)=\partial_v q(u,0)v + O(|v|^2),
$$
or, inverting $\partial_v q$,
\begin{equation}\label{T2}
v=\partial_v q(u,0)^{-1}A_{21}(u,0) u' + O(|v|^2) +O(|v||u'|) + O(|v'|).
\end{equation}
Substituting \eqref{T2} into \eqref{T1} and rearranging, 
we thus obtain the approximate viscous profile ODE
\begin{equation}\label{approxprof}
b_*(u)u'= f_*(u) -f_*(u_\pm)  + O(v^2) +O(|v||u'|) + O(|v'|).
\end{equation}

Motivated by \eqref{T2}--\eqref{approxprof}, we define an approximate
solution $(\bar u_{CE}, \bar v_{CE})$ of \eqref{intprof} by choosing 
$\bar u_{CE}$  as a solution of 
\begin{equation}
\label{NS}
b_*(\bar u_{CE})\bar u_{CE}' = f_*(\bar u_{CE}) -f_*(u_\pm),
\end{equation}
and $\bar v_{CE}$  as the first approximation given by \eqref{T2} 
\begin{equation}
\label{NSv1}
\begin{aligned}\bar v_{CE}   =    c_* (\bar u_{CE}) 
 \bar u_{CE}'.
\end{aligned}
\end{equation}

\subsubsection{Higher-order correctors}
Further expanding the second equation as
$$
A_{21}(u,0)u' + A_{22}(u,0)v'+ O(|v||u'|+|v||v'|)=\partial_v q(u,0)v + O(|v|^2)
$$
and setting $v=\bar v_{CE} + \tilde v$, $u=\bar u_{CE}$, we obtain
$$
A_{22}(u,0)(\bar v_{CE})'+ O(|v||u'|+|v||v'|)=\partial_v q(u,0)\tilde v + O(|v|^2)
$$
or, inverting $\partial_v q$,
\begin{equation}\label{T2new}
\tilde v=\partial_v q(u,0)^{-1}A_{22}(u,0) u' + O(|v|^2+|v||u'|+|v||v'|). 
\end{equation}
Accordingly, we define 
\be\label{NSv2}
\bar v_{CE,2}=\bar v_{CE}+ 
\partial_v q(\bar u_{CE},0)^{-1}A_{22}(u_{CE},0) u_{CE}'
\ee 
as a second-order corrector for $v$.
Substituting $\bar v_{CE,2}$ into the first equation and discarding
the Taylor remainder as before, we obtain a second-order
 corrector $\bar u_{CE,2}$ for $u$.
We can continue this process of Chapman--Enskog expansion
to all orders to obtain an approximation
\be\label{CEn}
\bar U_{CE}^N:=\bar U_{CE,1}+
\bar U_{CE,2}+ \dots,
\bar U_{CE,N}
\ee
to order $N$, where $\bar U_{CE,1}:=\bar U_{CE}$ is the basic approximant
at the first step.
 
\subsubsection{Existence and decay bounds}
Small amplitude shock profiles   solutions of \eqref{NS}  are constructed 
using the center manifold  analysis of \cite{Pe}
under conditions (i)-(ii) of Assumption \ref{goodred}; see discussion  in 
\cite{MaZ4}.

\begin{prop}\label{NSprofbds} Under Assumptions~\ref{profass} and \ref{goodred}, 
in a neighborhood of 
$(u_0, u_0)$ in $\RR^n \times \RR^n$, 
there is a smooth  manifold $\cS$ of dimension $n$  passing through $(u_0, u_0)$,  such that 
for $(u_-, u_+) \in \cS$ with   amplitude $\eps:=|u_+ -u_-| > 0$ 
sufficiently small, and direction $(u_+-u_-)/\eps $ sufficiently close
to $r(u_0)$,   the zero speed shock profile equation   \eqref{NS} has  a unique (up to translation) 
solution   $\bar u_{CE}$ in $\cU_*$. 
The shock profile is necessarily of {\rm Lax type}: i.e., with
dimensions of the unstable subspace of $df_*(u_-)$
and the stable subspace of $df_*(u_+)$ summing to one plus the
dimension of $u$, that is $n+1$.

Moreover, 
there is  $\theta>0$ and for all $k$ there is $C_k $ independent of $(u_-, u_+) $ and $\eps$,   
such that 
\begin{equation}\label{NSbds}
|\partial_x^k (\bar u_{CE}-u_\pm)|\le C_k \eps^{k+1}e^{-\theta \eps|x|},
\quad x\gtrless 0. 
\end{equation}
and, more generally,
\begin{equation}\label{higherNSbds}
|\partial_x^k (\bar u_{CE,j}|\le C_k \eps^{j+k+1}e^{-\theta \eps|x|},
\quad x\gtrless 0. 
\end{equation}
\end{prop}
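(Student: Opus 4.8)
The plan is to establish Proposition~\ref{NSprofbds} by a center-manifold reduction of the profile ODE \eqref{NS}, following the approach of \cite{Pe} as adapted in \cite{MaZ4}. First, using Assumption~\ref{goodred}, I would rewrite \eqref{intvprof} as a nondegenerate first-order ODE in the variable $u_2=(I-\pi_*)u$ with $u_1=\pi_*u$ determined algebraically (or as a slow variable), so that the equilibria $u_\pm$ appear as rest points. Linearizing about the base state $u_0$ and invoking the simple, genuinely nonlinear eigenvalue $\alpha$ of $df_*$ crossing zero (Assumption~\ref{profass}), one finds a one-dimensional center manifold transverse to a uniformly hyperbolic complement; the hyperbolic directions split into stable/unstable parts whose dimensions, together with the $n$-dimensional manifold $\cS$ of admissible endpoints, give the Lax count $n+1$. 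On the center manifold the reduced scalar equation is, to leading order, the Burgers-type profile equation $u_2' \sim c(\alpha(u)-\alpha(u_\pm))$ with $c\neq 0$ by genuine nonlinearity, whose heteroclinic connection exists and is unique up to translation for endpoints of small amplitude $\eps$ with direction near $r(u_0)$. This yields the existence and uniqueness assertion and identifies $\cS$ as the set of such $(u_-,u_+)$.

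Next, for the decay bounds \eqref{NSbds}, I would exploit the scaling structure of the reduced scalar equation. Writing $\eps:=|u_+-u_-|$ and rescaling $x\to\eps x$, the leading Burgers profile is $\bar u_{CE}-u_\pm \sim \eps\, w(\eps x)$ with $w$ a fixed $\tanh$-type profile decaying like $e^{-\theta|y|}$; differentiating in $x$ produces one extra power of $\eps$ per derivative, giving $|\partial_x^k(\bar u_{CE}-u_\pm)|\le C_k\eps^{k+1}e^{-\theta\eps|x|}$. The hyperbolic (non-center) components are slaved to the center variable through the center-manifold graph and inherit the same decay rate $e^{-\theta\eps|x|}$ but are actually smaller; the stated bound is the weakest common envelope. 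Making the dependence of $C_k$ uniform in $(u_-,u_+)$ and $\eps$ requires that the center-manifold reduction and the rescaling be carried out uniformly over the relevant neighborhood in $\cS$ — this is where one uses that $\alpha$ is simple and genuinely nonlinear on the whole of $\cU_*$, not just at $u_0$, so the reduced equation depends smoothly on parameters with uniformly bounded derivatives.

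Finally, the higher-order corrector bounds \eqref{higherNSbds} follow by induction on $j$: each corrector $\bar u_{CE,j}$ is defined by substituting the previous approximant into the Taylor-expanded profile relations \eqref{T1}--\eqref{T2new} and solving the resulting linearized equation, which inherits the same exponentially weighted structure. The key point is that at the $j$-th step the forcing is a product/composition of factors each of which has already been shown to be $O(\eps^{\cdot+1}e^{-\theta\eps|x|})$ with at least one factor of the form $\partial_x^{\geq 1}(\cdot)$ or $\tilde v$, so the source term is $O(\eps^{j+1}e^{-\theta\eps|x|})$ (and differentiating it $k$ times costs $\eps^k$ as before), and inverting the invertible linear operator $a_*$ — uniformly invertible by Assumption~\ref{goodred}(ii) after the rescaling — preserves both the power of $\eps$ and the exponential rate. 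The main obstacle, and the point requiring the most care, is the \emph{uniformity} of all constants $C_k$ with respect to $\eps$ and the endpoints: one must verify that the Peters\textsc{s}on/Pego center-manifold construction, the associated cutoff, and the $\eps$-rescaling can all be performed with estimates independent of the small parameter, which is precisely the sharpness needed for the stability analysis of \cite{MaZ3}.
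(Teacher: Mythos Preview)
Your proposal is correct and follows essentially the same route the paper indicates: the paper does not actually give a proof of this proposition, but simply refers to the center-manifold analysis of \cite{Pe} and the discussion in \cite{MaZ4}, which is precisely the reduction and Burgers-type scalar profile argument you outline. Your sketch in fact supplies more detail than the paper itself, including the scaling argument for \eqref{NSbds} and the inductive derivation of \eqref{higherNSbds}.
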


We denote by 
 $\cS_+$  the set  of $(u_-, u_+) \in \cS $  with  amplitude $\eps:=|u_+ -u_-| > 0$ 
sufficiently small  and direction $(u_+-u_-)/\eps $ sufficiently close
to $r(u_0)$ such that the profile $\bar u_{CE}$ exists.  
Given $(u_-, u_+) \in \cS_+  $ with associated profile $\bar u_{CE}$, 
we define $\bar v_{CE} $ by \eqref{NSv1} and 
    \begin{equation}
    \label{NSU}
    \bar U_{CE}^N := (\bar u_{CE}^N, \bar v_{CE}^N).
    \end{equation}
    It  is an approximate solution of \eqref{intprof} in the following sense: 

\begin{cor}\label{redbds}
For fixed $u_-$ and amplitude $\eps:=|u_+-u_-|$ sufficiently small,
\begin{equation}\label{eq:resbds}
\begin{aligned}
\cR_u^N&:= f(\bar u_{CE}^N,\bar v_{CE}^N)- f_*(u\pm)
,\\
\cR_v^N&:= g(\bar u_{CE}^N,\bar v_{CE}^N)'-q(\bar u_{CE}^N,\bar v_{CE}^N) 
\end{aligned}
\end{equation}
satisfy
\begin{equation}\label{L2resbds}
\begin{aligned}
|   \D_x^k \cR_u^N  (x) | 
&\le  C_{k,N} \eps^{N+k+4}e^{-\theta \eps|x|} , \\
|   \D_x^k \cR_v^N  (x) | 
&\le  C_{k,N} \eps^{N+k+3}e^{-\theta \eps|x|} , 
\quad x\gtrless 0,\\
\end{aligned}
\end{equation}
where $C_{k,N}$   is  independent of $(u_-, u_+) $ and  $\eps=|u_+ -u_-|$. 
\end{cor}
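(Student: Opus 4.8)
The plan is to obtain \eqref{L2resbds} by substituting the Chapman--Enskog ansatz $\bar U_{CE}^N$ directly into the two scalar relations \eqref{intprof} satisfied by a true profile, and then carrying out an induction on $N$ in which, at each step of the construction of Section~\ref{CEapprox}, one keeps track of both the order in $\eps$ and the exponential decay of the Taylor remainder that was discarded at that step. All estimates are made separately on $x>0$ and on $x<0$ with the corresponding endpoint $u_+$, resp.\ $u_-$, and are uniform in $(u_-,u_+)\in\cS_+$ and in $\eps$ precisely because the input bounds \eqref{NSbds}--\eqref{higherNSbds} on $\bar u_{CE}$ and on the correctors are.

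For the base case $N=1$ one substitutes $\bar U_{CE}^1=(\bar u_{CE},\bar v_{CE})$, with $\bar v_{CE}=c_*(\bar u_{CE})\bar u_{CE}'$ and $c_*(u)=\partial_vq(u,0)^{-1}A_{21}(u,0)$, into \eqref{intprof}. In the first relation one Taylor expands $f(u,v)$ in $v$ about $v=0$, uses $f(\cdot,0)=f_*$ and $\partial_vf=A_{12}$ from \eqref{Aform}, the identity $A_{12}(u,0)\partial_vq(u,0)^{-1}A_{21}(u,0)=-b_*(u)$ from \eqref{cevalues}, and the profile equation \eqref{NS}; the terms of order $0$ and $1$ in $v$ then cancel exactly, leaving $\cR_u^1$ equal to the quadratic Taylor remainder $\int_0^1(1-t)\,\partial_v^2f(\bar u_{CE},t\bar v_{CE})[\bar v_{CE},\bar v_{CE}]\,dt$. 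In the second relation one removes the leading balance using the defining identity $A_{21}(\bar u_{CE},0)\bar u_{CE}'=\partial_vq(\bar u_{CE},0)\bar v_{CE}$ and \eqref{keyrel}, so that $\cR_v^1$ becomes a finite sum of terms of the form (smooth bounded coefficient) times a product of factors drawn from $\bar u_{CE}-u_\pm$, $\partial_x\bar u_{CE}$, $\partial_x^2\bar u_{CE}$, $\bar v_{CE}$, $\partial_x\bar v_{CE}$. Each such product is estimated by \eqref{NSbds} together with $\bar v_{CE}=c_*\bar u_{CE}'$: every factor $\bar v_{CE}$ and every $x$-derivative brings an extra power of $\eps$, and $e^{-\theta\eps|x|}\cdot e^{-\theta\eps|x|}\le e^{-\theta\eps|x|}$ keeps the exponential rate; applying $\D_x^k$ and using the Leibniz rule together with $\partial_xe^{-\theta\eps|x|}=O(\eps\,e^{-\theta\eps|x|})$, which costs one further power of $\eps$ per derivative, then yields the full $\D_x^k$ bounds of \eqref{L2resbds} for $N=1$.

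The inductive step repeats the same pattern: given $\bar U_{CE}^N$ with residuals of the stated order, one reads the next $v$-corrector off the second relation of \eqref{intprof} (as in \eqref{T2new}--\eqref{NSv2}) and the next $u$-corrector off the first, and in each case the discarded term is a Taylor remainder among whose factors there is now a fresh corrector $\bar U_{CE,j}$; by \eqref{higherNSbds} that corrector and each of its $x$-derivatives carries an additional power of $\eps$, so a full round of correction lowers the order of both $\cR_u$ and $\cR_v$ by one power, while the exponential rate is preserved (shrinking $\theta$ slightly if one wishes to absorb the polynomial prefactors $C_{k,N}$). The main obstacle---indeed essentially the only substantive point---is this bookkeeping: one must verify that the alternating coupling between the $u$- and $v$-equations really does yield a net gain of one power of $\eps$ per corrector at every level and simultaneously for every $x$-derivative, which forces one to propagate, at each stage, the whole family of size-and-decay bounds \eqref{NSbds}--\eqref{higherNSbds} for all derivatives of the correctors rather than merely for the correctors themselves. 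Granting this accounting, \eqref{L2resbds} follows.
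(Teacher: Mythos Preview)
Your proposal is correct and follows exactly the route the paper sketches in two sentences: substitute $\bar U_{CE}^N$ into \eqref{intprof}, Taylor expand, observe that the defining relations \eqref{NS}--\eqref{NSv1} (and, at higher order, \eqref{NSv2}) kill the leading terms, and bound the remainders using \eqref{NSbds}--\eqref{higherNSbds}; you have simply written out the bookkeeping the paper omits.

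One small indexing caveat: the paper's own proof anchors the induction at $N=0$, not $N=1$, and indeed the leading uncancelled term in $\cR_v$ at the first step is $A_{22}(\bar u_{CE},0)\bar v_{CE}'=O(\eps^3 e^{-\theta\eps|x|})$, which matches \eqref{L2resbds} with $N=0$ rather than $N=1$. The paper's definition \eqref{CEn} and its proof are not quite consistent on this point, so the discrepancy is not yours, but you should align your base-case label with whatever convention makes \eqref{L2resbds} come out right.
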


\begin{proof}
For $N=0$, $k=0$,
bounds \eqref{L2resbds} follow by expansions \eqref{T1} and \eqref{T2}, 
definitions \eqref{NS} and \eqref{NSv1}, and bounds \eqref{NSbds}.  
Bounds for $N$, $k>0$ follow similarly.
\end{proof}

 %

\section{Statement of the main theorem}\label{results}

We are now ready to state the main result.
Define a base state $U_0=(u_0,0)$ and a
neighborhood $\cU=\cU_*\times \cV$.

\begin{theo}\label{main}
Let Assumptions \ref{SS}, \ref{GC}, and  \ref{goodred} hold on the
neighborhood $\cU$ of $U_0$, with $f,A,Q\in C^{\infty}$. 
Then, there are $\eps_0 > 0$  and 
$\delta > 0$ such that for $(u_-, u_+) \in \cS+$ with  amplitude $\eps:=|u_+-u_-| \le \eps_0$,   the standing-wave equation 
\eqref{relax} has a solution   
$\bar U$ in $\cU$, 
 with associated Lax-type 
equilibrium shock $(u_-,u_+)$, satisfying for all $k  $, $N$: 
\begin{equation}\label{finalbds}
\begin{aligned}
\big|\partial_x^k (\bar U- \bar U_{CE}^N)\big|
&\le C_{k,N} \eps^{k+N+2}e^{-\delta  \eps|x|},\\
|\partial_x^k (\bar u-u_\pm)|&\le C_k \eps^{k+1}e^{-\delta \eps|x|},
\quad x\gtrless 0,\\
\big|\partial_x^k (\bar v-v_*(\bar u)\big|
&\le C_k \eps^{k+2}e^{-\delta  \eps|x|},\\
\end{aligned}
\end{equation}  
where $\bar U_{CE}=(\bar u_{CE}, \bar v_{CE})$ is the 
approximating Chapman--Enskog profile defined in \eqref{NSU}, and
$C_k$, $C_{k,N}$ are independent of  $\eps$. 
Moreover, up to translation, this solution is unique
within a ball of radius $c\eps$ about $\bar U_{CE}$ in norm 
$\eps^{-1/2}\|\cdot\|_{L^2}+\eps^{-3/2}\|\D_x \cdot\|_{L^2}+\dots+
 \eps^{-11/2}\|\D_x^5 \cdot\|_{L^2} $, for $c>0$ sufficiently small
and $K$ sufficiently large.
(For comparison, $\bar U_{CE}-U_\pm$ is order $\eps$ in this norm,
by \eqref{finalbds}(ii)--(iii).)
\end{theo}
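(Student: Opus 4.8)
The plan is to reformulate the standing-wave equation \eqref{relax} as a fixed-point problem for the correction $\tilde U := \bar U - \bar U_{CE}^N$ around a high-order Chapman--Enskog approximant, and to solve it by the parameter-dependent Nash--Moser scheme of \cite{TZ}. First I would substitute $U = \bar U_{CE}^N + \tilde U$ into \eqref{intprof}, using Corollary \ref{redbds} to record that the residual $\cR^N = (\cR_u^N, \cR_v^N)$ is of size $O(\eps^{N+4})$ (resp. $O(\eps^{N+3})$) in the weighted norms, hence can be made arbitrarily small in $\eps$ by taking $N$ large. Linearizing about $\bar U_{CE}^N$, one obtains an operator $\sL$ acting on $\tilde U$ whose structure mirrors the semilinear case of \cite{MZ2,MZ3}: applying the macro--micro decomposition of \cite{LY}, the $v$-component is governed by the uniformly invertible (by \eqref{qassum}) symbol $\partial_v q$, while the $u$-component, after integration and use of Assumption \ref{goodred}(i)--(ii), reduces to a nondegenerate Navier--Stokes-type profile operator in the $u_2 = (I-\pi_*)u$ variable, with the $u_1 = \pi_* u$ equation being a scalar genuinely nonlinear transport-type equation controlled by Assumption \ref{profass}. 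The genuine coupling Assumption \ref{GC}, via the Kawashima compensator $K$ in \eqref{K}, supplies the dissipative estimate that closes the linearized energy bounds.

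The core linear step is to establish, for the linearized operator $\sL$ about states near $\bar U_{CE}^N$, an a priori estimate of the schematic form
\begin{equation*}
\|\tilde U\|_{H^{s}_\eps} \ \lesssim \ \eps^{-p}\, \|\sL \tilde U\|_{H^{s+\ell}_\eps}
\end{equation*}
in the $\eps$-weighted Sobolev norms appearing in the theorem statement (the ones with weights $\eps^{-1/2}, \eps^{-3/2},\dots$), where the solution operator loses both $\ell$ derivatives and $p$ powers of $\eps$. The loss of derivatives comes from the degeneracy of the profile ODE (the quasilinear coupling $A(U)U'$ means the coefficient of the top-order term vanishes in the degenerate directions), and the loss of powers of $\eps$ is intrinsic to small-amplitude relaxation profiles, where the effective diffusion length is $O(\eps^{-1})$ and each inversion of the profile operator costs a power of $\eps$. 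I would prove this estimate by combining: (a) a Kawashima-type energy estimate using $S$ and $K$ from Assumptions \ref{SS} and \ref{GC}; (b) exponential weight estimates $e^{\delta\eps|x|}$ to capture the decay rates in \eqref{finalbds}, absorbing the weight-derivative terms at the cost of $\eps$; and (c) the reduced-system analysis of \cite{MaZ3,Z1} to handle the scalar $u_1$ mode via the genuinely nonlinear structure.

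Given this linear estimate with controlled, $\eps$-dependent losses, the nonlinear existence then follows by feeding the scheme into the abstract parameter-dependent Nash--Moser theorem of \cite{TZ}: the smoothing operators, the quadratic convergence of the iteration, and the bookkeeping of $\eps$-powers are all handled there, provided the residual is small enough relative to the losses — which is arranged by choosing $N$ large. Uniqueness within the stated $c\eps$-ball in the norm $\eps^{-1/2}\|\cdot\|_{L^2} + \dots + \eps^{-11/2}\|\D_x^5\cdot\|_{L^2}$ follows from the same linearized estimate applied to the difference of two solutions, using that the right-hand side is then quadratically small. The decay bounds \eqref{finalbds}(ii)--(iii) for $\bar u - u_\pm$ and $\bar v - v_*(\bar u)$ are obtained by combining the $O(\eps^{k+N+2})$ bound on $\tilde U$ with the bounds \eqref{NSbds}, \eqref{NSv1} on the Chapman--Enskog approximant and noting that the leading term dominates. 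The main obstacle is step (b)--(c): tracking the \emph{exact} powers of $\eps$ through the weighted linearized estimate — being too lossy destroys the sharp rates needed (and the needs of \cite{MaZ3}), while the genuine-coupling/genuinely-nonlinear interplay in the scalar mode is delicate precisely because the profile ODE is allowed to degenerate; this is why a naive contraction mapping fails and the Nash--Moser machinery with $\eps$-dependent losses is required (cf. Remark \ref{whynm}).
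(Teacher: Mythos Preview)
Your proposal is correct and follows essentially the same route as the paper: recast \eqref{intprof} as $\Phi^\eps(\tilde U)=0$ for the correction $\tilde U=\bar U-\bar U_{CE}^N$, verify tame bounds on $\Phi^\eps$, $(\Phi^\eps)'$, $(\Phi^\eps)''$ and a right-inverse estimate of the form $\|\Psi^\eps(\tilde U)F\|_{H^s_{\eps,\delta}}\lesssim \eps^{-1}(\|\tilde U\|_{H^{s+1}_{\eps,\delta}}|F|_{s_0+2}+|F|_{s+1})$ via Kawashima energy estimates for the micro part plus a linearized Chapman--Enskog/reduced-ODE estimate for $u$, then invoke the Nash--Moser scheme of \cite{TZ}. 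Two small corrections: the derivative loss is not caused by degeneracy of $A$ but by the incompatibility between the \emph{integrated} form needed for the Chapman--Enskog step and the quasilinear form needed for sharp energy estimates (this is exactly Remark~\ref{whynm}); and for uniqueness the paper does not simply estimate the difference but applies the uniqueness half of the \cite{TZ} theorem under a phase condition $\Pi_{\ker(\Phi^\eps)'}\tilde U=0$, then removes that condition a posteriori by showing any nearby solution can be translated to satisfy it.
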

 
Bounds \eqref{finalbds} show that (i) the behavior of profiles
is indeed well-described by the Navier--Stokes approximation,
and (ii) profiles indeed satisfy the exponential decay rates
required for the proof of spectral stability in \cite{MaZ3}.
From the second observation, we obtain immediately from
the results of \cite{MaZ3} the following stability result.

\begin{cor}[\cite{MaZ3}]\label{MaZcor}
Under the assumptions of Theorem \ref{main}, the resulting
profiles $\bar U$ are spectrally stable for amplitude $\eps$
sufficiently small, in the sense that the linearized operator
$L:= \partial_x A(\bar U) -dQ(\bar U)$ about $\bar U$
has no $L^2$ eigenvalues
$\lambda$ with $\Re \lambda \ge 0$ and $\lambda \ne 0$.
\end{cor}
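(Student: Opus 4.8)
The plan is to obtain the corollary directly from the spectral-stability theory of \cite{MaZ3}, for which Theorem \ref{main} furnishes exactly the input that was left there as a hypothesis. The first step is to recall the structure of the result in \cite{MaZ3}: for the operator $L=\partial_x A(\bar U)-dQ(\bar U)$ linearized about a small-amplitude equilibrium relaxation shock, one has absence of $L^2$ eigenvalues $\lambda$ with $\Re\lambda\ge 0$, $\lambda\neq 0$, provided (i) the structural hypotheses (SD) and (GC) hold together with the corresponding conditions on the reduced system, and (ii) a profile $\bar U$ exists and obeys exponential decay bounds, with decay rate and $\eps$-scaling of successive derivatives of the type recorded in \eqref{finalbds}. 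Accordingly, I would reduce the argument to checking that (i) and (ii) hold in the present setting, and then invoke that theorem.

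For (i): Assumptions \ref{SS} (symmetric--dissipativity), \ref{GC} (genuine coupling), \ref{goodred}, and \ref{profass} are precisely the hypotheses of Theorem \ref{main}, hence are in force, and by Proposition \ref{redconds} they descend to the reduced Navier--Stokes system $f_*(u)'=(b_*(u)u')'$; these are the standing assumptions of \cite{MaZ3}. For (ii): Theorem \ref{main} provides, for each $(u_-,u_+)\in\cS_+$ with amplitude $\eps\le\eps_0$, a solution $\bar U$ of \eqref{relax} of Lax type for the equilibrium shock $(u_-,u_+)$, satisfying the bounds \eqref{finalbds}, i.e. exponential decay at rate $\delta\eps$ with $|\partial_x^k(\bar u-u_\pm)|\lesssim\eps^{k+1}e^{-\delta\eps|x|}$, $|\partial_x^k(\bar v-v_*(\bar u))|\lesssim\eps^{k+2}e^{-\delta\eps|x|}$, and with $\bar U$ close to the Chapman--Enskog approximant $\bar U_{CE}^N$ to order $\eps^{k+N+2}$. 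These are exactly the estimates the stability proof of \cite{MaZ3} consumes; the essential point is that it needs the sharp powers of $\eps$, which is what distinguishes \eqref{finalbds} from the merely exponential bounds available in the special-case constructions of \cite{MaZ4, DY} and is the reason those do not suffice for \cite{MaZ3}.

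With (i) and (ii) in hand, I would simply apply the spectral-stability theorem of \cite{MaZ3} to the profile $\bar U$ produced by Theorem \ref{main}; the excluded value $\lambda=0$ is the translational eigenvalue. There is no new analysis to perform here --- which is why the statement appears as a corollary --- so the only step requiring care is the bookkeeping that identifies the normalization in \eqref{finalbds} (powers of $\eps$, the rate $\delta\eps$, and the coordinatization $u=(u_1,u_2)$ with $u_1=\pi_*u$ adapted to $\ker b_*$) with the precise form of the hypotheses imposed in \cite{MaZ3}; once that identification is made, the result of \cite{MaZ3} applies verbatim. For the reader wanting the mechanism rather than the citation: the argument of \cite{MaZ3} rescales $x\mapsto\eps x$, $\lambda\mapsto\eps\lambda$ so that $L$ converges, in the small-amplitude limit, to the linearized operator about the reduced genuinely nonlinear Lax viscous profile --- whose spectral stability is classical --- and then controls the relaxation correction and the convergence uniformly in $\eps$ using precisely the bounds \eqref{finalbds}; I would not reproduce that analysis here.
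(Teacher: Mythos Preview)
Your proposal is correct and follows the same approach as the paper: reduce to the spectral-stability theorem of \cite{MaZ3} and verify its hypotheses using the bounds \eqref{finalbds} from Theorem \ref{main}. The only difference is that the paper carries out the bookkeeping you flag as ``requiring care'': it records the three specific pointwise conditions actually used in \cite{MaZ3}, namely $|\bar U'|_{L^\infty}\le C\eps^2$, $|\bar U''(x)|\le C\eps\,|\bar U'(x)|$, and the directional estimate $\big|\bar U'/|\bar U'| + \sgn(\eta)R_0\big|\le C\eps$ with $R_0=(r(u_0),\,dv_*(U_0)r(u_0))^T$, and observes that these follow from \eqref{finalbds}.
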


\begin{proof}
In \cite{MaZ3}, under the same structural conditions assumed here,
it was shown that small-amplitude profiles
of general quasilinear relaxation systems are spectrally stable,
provided that
   $|\bar U'|_{{}_{L^\infty}}\leq C|U_+-U_-|^2$,
   $|\bar U''(x)|\leq C|U_+-U_-|\,|\bar U'(x)|$,
and
\begin{equation}\label{dirder}
   \Big|\frac{\bar U'}{|\bar U'|} +\sgn (\eta) R_0\Big|\leq C\,|U_+-U_-|,
\qquad
R_0:=\begin{pmatrix}r(u_0)\\ dv_*(U_0)r(u_0)\end{pmatrix},
\end{equation}
where $r(u_0)$ as defined in Theorem \ref{main}
is the eigenvector of $df_*$ at base point $U_0$ in
the principal direction of the shock.
These conditions are readily verified using \eqref{finalbds}.
\end{proof}

The remainder of the paper is devoted to the proof of Theorem \ref{main}.

\section{Outline of the proof}\label{outline}

\subsection{Linear and nonlinear perturbation equations}\label{pertsec}
 
Defining the perturbation variable $U:= \bar U- \bar U_{CE}^N$,
where $\bar U_{CE}^N$ is as in \eqref{CEn},
we obtain from \eqref{intprof} the nonlinear perturbation equations
$\Phi^\eps(U)=0$, where
\begin{equation}\label{intpert}
\Phi^\eps(U):=
\begin{pmatrix}
 f_1(\bar U^\eps_{CE}+U)-f_*(U_-) \\
(A_{21}(\bar U^\eps_{CE} +U) (\bar u^\eps_{CE} +u)'
+(A_{22}(\bar U^\eps_{CE} +U) (\bar v^\eps_{CE} +v)'
-q(\bar U^\eps_{CE}+U)
\end{pmatrix}.
\end{equation}
Formally linearizing $\Phi^\eps$ about an approximate solution $\tilde U$,
we obtain 
\begin{equation}\label{Phi'}
(\Phi^\eps)'(\tilde U)U=
\begin{pmatrix}
A_{11} u +A_{12} v \\
A_{21} u'+  A_{22} v'
-Q_{22}v -bU
\end{pmatrix},
\end{equation}
where 
\begin{equation}\label{Adef}
A=df(\bar U^\eps_{CE}+\tilde U),
\quad
Q_{22} = \D_v q (\bar U^\eps_{CE}+\tilde U),
\end{equation}
and
\begin{equation}\label{bdef}
bU = \big( d(A_{21},A_{22})(\bar U^\eps_{CE}+\tilde U)U\big) 
(\bar U^\eps_{CE}+\tilde U)'.
\end{equation}

The associated linearized equation for a given forcing term
$F$ is 
\begin{equation}\label{linpert}
(\Phi^\eps)'(\tilde U) U= F=\begin{pmatrix} f\\g\end{pmatrix}.
\end{equation}

We have also
\begin{equation}\label{defPhi''}
(\Phi^\eps)''(\tilde U)(U,\hat U)=
\begin{pmatrix}
N_1(\tilde U)(U,\hat U)\\
N_2(\tilde U)(U,\hat U)'+N_3(\tilde U)(U,\hat U)
\end{pmatrix},
\end{equation}
where $N_j(\tilde U)$ are quadratic forms depending smoothly on
$\tilde U$.

\subsection{Functional analytic setting}\label{norms}
The coefficients  and the error term $\cR$ are smooth functions of 
$\bar U_{CE'}$ and its derivative, so behave like smooth functions of 
$ \eps x$. Thus, it is natural to solve the equations in spaces which reflect 
this scaling. We do not introduce explicitly the change of variables
$\tilde x = \eps x$, but introduce norms which correspond to the usual $H^s$ norms 
in the $\tilde x $ variable : 
\begin{equation} \label{defnorm}
\|f \|_{H^s_\eps} =  
\eps^{\mez}  \|f \|_{L^2}+
\eps^{-\mez }\|\partial_x f\|_{L^2}+ \dots + 
\eps^{\mez-s}\|\partial_x^s f\|_{L^2}.
\end{equation}
We also introduce weighted spaces and norms, which encounter for the exponential 
decay of the source and solution: introduce the notations.  
\begin{equation}
\label{modx}
<x>:= (x^2+1)^{1/2}
\end{equation}
For  $\delta \ge 0$ (sufficiently small), we denote by $H^s_{\eps, \delta}$ the space of 
functions $f$ such that   $ e^{\delta  \eps <x>} f \in H^s$ equipped with the norm
\begin{equation}
\label{defwnorm}
\|f \|_{H^s_{\eps, \delta} } =   \eps^{\mez} \sum_{k \le s} \eps^{-k}  \|e^{\delta \eps <x>} \D_x^k f \|_{L^2}.
\end{equation}
 Note that for $\delta \le 1$, this norm is equivalent, with constants independent of $\eps$ and $\delta$, 
 to the norm
 $$
\|e^{\delta \eps <x>}  f \|_{H^s_\eps} . 
 $$
For fixed $\delta$, introduce spaces $E_s:=H^s_{\eps,\delta}$
with norm $\|\cdot\|_s=\|\cdot\|_{H^s_{\eps,\delta}}$ and
$F_s:=\bp H^{s+1}_{\eps,\delta}\\ H^s_{\eps,\delta} \ep$
with norm $|\bp f\\g\ep|_s=\|f\|_{H^{s+1}_{\eps,\delta}}
+\|g\|_{H^{s}_{\eps,\delta}}$.

\subsection{Nash Moser iteration scheme}\label{nmscheme}

\begin{lem}\label{WKBapprox}
$|\Phi(0)|_{H^s_{\delta, \eps}}\le C\eps^{N+2}$
for all $0\le s\le \bar s$, some $C>0$.
\end{lem}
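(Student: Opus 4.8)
The plan is to recognize that $\Phi^\eps(0)$ is, up to relabeling, exactly the residual of the Chapman--Enskog approximant evaluated on the full system \eqref{intprof}, and then to read off the claimed bound directly from the residual estimates already recorded in Corollary \ref{redbds}. Concretely, setting $U=0$ in \eqref{intpert} gives
\[
\Phi^\eps(0)=\begin{pmatrix}
f_1(\bar U^\eps_{CE})-f_*(u_-)\\[2pt]
A_{21}(\bar U^\eps_{CE})(\bar u^\eps_{CE})'+A_{22}(\bar U^\eps_{CE})(\bar v^\eps_{CE})'-q(\bar U^\eps_{CE})
\end{pmatrix}
=\begin{pmatrix}\cR_u^N\\ \cR_v^N\end{pmatrix},
\]
with $\cR_u^N,\cR_v^N$ as in \eqref{eq:resbds} (here I use that $f(u,v)_\pm=f_*(u_\pm)$ and $g=$ the second block of the flux, matching the notation of Section \ref{CEapprox}). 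So the lemma is a translation of the pointwise bounds \eqref{L2resbds} into the weighted norm $|\cdot|_{F_s}=\|\cdot\|_{H^{s+1}_{\eps,\delta}}+\|\cdot\|_{H^s_{\eps,\delta}}$.

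The first step is to carry out that translation for a single weighted $H^s_{\eps,\delta}$ norm. Given the pointwise bound $|\D_x^k w(x)|\le C_{k,N}\,\eps^{m+k}e^{-\theta\eps|x|}$ for $x\gtrless 0$ (with $m=N+4$ for $\cR_u^N$ and $m=N+3$ for $\cR_v^N$), one estimates, for $\delta\le\theta/2$ say,
\[
\eps^{-k}\big\|e^{\delta\eps\langle x\rangle}\D_x^k w\big\|_{L^2}
\le C_{k,N}\,\eps^{m}\,\big\|e^{-(\theta-\delta)\eps|x|+\delta\eps}\big\|_{L^2}
\le C'_{k,N}\,\eps^{m}\,\eps^{-1/2},
\]
using $\|e^{-c\eps|x|}\|_{L^2(\RR)}=(c\eps)^{-1/2}\sqrt 2$. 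Multiplying by the prefactor $\eps^{1/2}$ in \eqref{defwnorm} and summing over $k\le s$ gives $\|w\|_{H^s_{\eps,\delta}}\le C''_{s,N}\,\eps^{m}$. Applying this with $w=\cR_u^N$, $s+1$ in place of $s$, and $m=N+4$ yields $\|\cR_u^N\|_{H^{s+1}_{\eps,\delta}}\le C\eps^{N+4}\le C\eps^{N+2}$; applying it with $w=\cR_v^N$, exponent $s$, and $m=N+3$ yields $\|\cR_v^N\|_{H^s_{\eps,\delta}}\le C\eps^{N+3}\le C\eps^{N+2}$. Adding the two and taking the maximum of the constants over $0\le s\le\bar s$ gives $|\Phi^\eps(0)|_{F_s}\le C\eps^{N+2}$, which is the assertion (the $H^s_{\delta,\eps}$ in the statement being the norm $|\cdot|_s$ on $F_s$).

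There is no real obstacle here; the only point requiring a little care is bookkeeping with the weight. One must check that the exponential weight $e^{\delta\eps\langle x\rangle}$ in the norm is dominated by the decay $e^{-\theta\eps|x|}$ in \eqref{L2resbds}, which is exactly why the hypothesis "$\delta\ge0$ sufficiently small" (specifically $\delta<\theta$) enters, and that the resulting Gaussian-type integral contributes precisely the factor $\eps^{-1/2}$ that cancels against the $\eps^{1/2}$ built into the definition \eqref{defwnorm} — this cancellation is the reason the $\eps$-power in the final bound matches the residual order $N+3$ (hence the stated $N+2$ with room to spare) rather than degrading by a half power. A second, purely cosmetic, point is matching the block structure: the $F_s$ norm weights the first ($u$-)component in $H^{s+1}$ and the second in $H^s$, which is consistent with the extra power of $\eps$ available in the bound \eqref{L2resbds} for $\cR_u^N$ versus $\cR_v^N$. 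Everything else is the routine estimate above, valid uniformly for $0\le s\le\bar s$ since Corollary \ref{redbds} supplies constants for all $k$.
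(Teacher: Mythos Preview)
Your proof is correct and follows exactly the route the paper takes: the paper's own proof is the single line ``Immediate from \eqref{L2resbds} and \eqref{defnorm},'' and you have simply written out the computation behind that sentence, including the cancellation of the $\eps^{-1/2}$ from $\|e^{-c\eps|x|}\|_{L^2}$ against the $\eps^{1/2}$ prefactor in \eqref{defwnorm}. There is nothing to add.
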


\begin{proof}
Immediate from \eqref{L2resbds} and \eqref{defnorm}.
\end{proof}

\begin{lem}\label{tamederivs}
$\Phi^\eps$ is Frechet differentiable 
from $H^{s+1}_{\eps,\delta}\to H^{s}_{\eps,\delta} $, 
for all $s\ge 0$, $\eps>0$, $\delta\ge 0$, and, 
for $s_0\ge 1$, all $s$ such that 
 $s_0 + 1 \leq s + 1 \leq \bar s,$ 
and all $U,V,W \in H^{s+1}_{\eps,\delta},$
\begin{equation}\label{tamePhi}
  | \Phi^\e(U)|_{s} \leq C_0 (1 + | U |_{{H^{s+1}_{\eps,\delta}}} + |U|_{H^{s_0 + 1}_{\eps,\delta}}|U|_{H^s_{\eps,\delta}}), 
\end{equation}
\begin{equation}\label{tamePhi'}
  | (\Phi^\e)'(U) \cdot V|_{s}  \leq  
C_0 (|V|_{H^{s+1}_{\eps,\delta}} + |V|_{H^{s_0+1}_{\eps,\delta}}
|U|_{H^{s+1}_{\eps,\delta}}), 
\end{equation}
and
  \begin{equation} \begin{aligned} \label{Phi''} 
| (\Phi^\e)''(U) \cdot (V, W)|_{s} \leq C_0 & \big(|V|_{H^{s_0 + 1}_{\eps,\delta}} |W|_{H^{s + 1}_{\eps,\delta}} + 
|V|_{H^{s + 1}_{\eps,\delta}} |W|_{H^{s_0 + 1}_{\eps,\delta}} \\ 
& + |U|_{H^{s + 1}_{\eps,\delta}}|V|_{H^{s_0 + 1}_{\eps,\delta}} 
|W|_{H^{s_0 + 1}_{\eps,\delta}}\big),
  \end{aligned}\end{equation}
where $C$ is uniformly bounded for $|U|_{H^{s_0+1}_{\eps,\delta}}\le C$,
for any fixed value of $\delta$.
\end{lem}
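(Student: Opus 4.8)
\textbf{Proof plan for Lemma \ref{tamederivs}.}

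The plan is to reduce all three tame estimates to two ingredients: (a) the explicit algebraic form of $\Phi^\eps$, $(\Phi^\eps)'$, $(\Phi^\eps)''$ recorded in \eqref{intpert}, \eqref{Phi'}, \eqref{defPhi''}, which exhibit them as finite sums of terms of the shape (smooth coefficient evaluated at $\bar U_{CE}^\eps + U$) times (products of $U$ and $\bar U_{CE}^\eps$ and one derivative thereof); and (b) the standard Moser-type product and composition inequalities in $H^s$, translated to the $\eps$-scaled, $\delta$-weighted spaces $H^s_{\eps,\delta}$. For (b) I would first record the Gagliardo--Nirenberg/Moser product estimate in the $\tilde x=\eps x$ variable, namely $\|fg\|_{H^s}\lesssim \|f\|_{H^{s_0}}\|g\|_{H^s}+\|f\|_{H^s}\|g\|_{H^{s_0}}$ for $s_0>1/2$ (here $s_0\ge 1$ suffices), together with the composition estimate $\|G(w)\|_{H^s}\lesssim C(\|w\|_{L^\infty})(1+\|w\|_{H^s})$ for $G$ smooth with $G(0)=0$, and the Sobolev embedding $\|w\|_{L^\infty}\lesssim \|w\|_{H^{s_0}}$. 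Because the $H^s_{\eps,\delta}$ norm is, up to $\eps$- and $\delta$-independent constants, the $H^s$ norm of $e^{\delta\eps\langle x\rangle}f$ in the rescaled variable (as observed after \eqref{defwnorm}), and the weight $e^{\delta\eps\langle x\rangle}$ distributes across products by $e^{\delta\eps\langle x\rangle}(fg)=(e^{\delta\eps\langle x\rangle}f)g$ while the coefficients $A_{ij}$, $Q_{22}$, $b$ are evaluated at $\bar U_{CE}^\eps$ plus an unweighted $U$ and are uniformly bounded in all $H^s_{\eps,\delta}$ norms (by Proposition \ref{NSprofbds}/Corollary \ref{redbds} for the $\bar U_{CE}^\eps$ part and by the hypothesis $|U|_{H^{s_0+1}_{\eps,\delta}}\le C$ plus composition for the rest), these three facts transfer verbatim to the $\|\cdot\|_{H^s_{\eps,\delta}}$ scale with no loss in $\eps$.

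The execution then goes term by term. For \eqref{tamePhi}: in $f_1(\bar U_{CE}^\eps+U)-f_*(U_-)$ write $f_1(\bar U_{CE}^\eps+U)-f_1(\bar U_{CE}^\eps)$ plus the already-estimated residual $\cR_u^N$ from Corollary \ref{redbds}; the difference is $G(\bar U_{CE}^\eps,U)$ with $G$ smooth vanishing at $U=0$, handled by composition, giving the $1+|U|_{H^{s+1}}$ term, and the quadratic term $|U|_{H^{s_0+1}}|U|_{H^s}$ arises exactly where the product estimate is applied to split a high derivative off a low one. The terms involving $(\bar u_{CE}^\eps+u)'$ and $(\bar v_{CE}^\eps+v)'$ are products of a smooth-coefficient factor with a first derivative of $\bar U_{CE}^\eps + U$; the product estimate in $H^s$ applied to coefficient $\times \partial_x U$ loses one derivative on $U$, which is precisely why the target space is $H^{s+1}\to H^s$, and produces the stated mix of linear and quadratic terms. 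For \eqref{tamePhi'} and \eqref{Phi''} the same bookkeeping applies to the explicit forms \eqref{Phi'}, \eqref{bdef}, \eqref{defPhi''}: each term is (at most one) derivative of $V$ (resp. of $V$ and $W$) against coefficients that are smooth functions of $\bar U_{CE}^\eps+U$; distributing derivatives by Leibniz and invoking the product estimate to keep the highest derivative on at most one factor yields exactly the right-hand sides displayed, with the cubic term in \eqref{Phi''} coming from the case where the high derivative lands on the coefficient $d^2(A_{21},A_{22})(\bar U_{CE}^\eps+U)$, forcing one $|U|_{H^{s+1}}$ and two low-norm factors of $V,W$. Uniformity of the constant for $|U|_{H^{s_0+1}_{\eps,\delta}}\le C$ is inherited from the composition estimate, whose constant depends only on $\|\bar U_{CE}^\eps+U\|_{L^\infty}\lesssim \|\bar U_{CE}^\eps\|_{H^{s_0}_{\eps,\delta}}+\|U\|_{H^{s_0}_{\eps,\delta}}$, which is bounded.

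The main obstacle is not the Moser calculus itself but verifying that \emph{no negative power of $\eps$ is introduced} when passing from $H^s$ in the rescaled variable to $\|\cdot\|_{H^s_{\eps,\delta}}$, in particular that the coefficient factors (which are $O(1)$, not $O(\eps)$, in $L^\infty$, unlike $\bar U_{CE}^\eps{}'$ which is $O(\eps^2)$) are controlled in the correct scaled norm — one must use that $\partial_x$ acting on a function of $\eps x$ costs a factor $\eps$, which is exactly compensated by the $\eps^{-k}$ weights in \eqref{defwnorm}, so that e.g. $\|A_{ij}(\bar U_{CE}^\eps)\|_{H^s_{\eps,\delta}}\le C\eps^{1/2}$ with $C$ independent of $\eps,\delta$, and similarly for its $U$-dependent part via composition. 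Once this scaling invariance is checked on a single generic term, the remaining work is routine Leibniz expansion and collecting terms, and care that every product estimate keeps the high-derivative count on one factor only, matching the low-norm slots $H^{s_0+1}$ on the right-hand sides.
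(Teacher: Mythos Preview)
Your proposal is correct and follows essentially the same approach as the paper: the paper's proof is a single sentence invoking Moser's inequality, the explicit form \eqref{Phi'} of the derivatives, and the observation that $|\cdot|_{H^s_{\eps,\delta}}$ is a fixed weighted $H^s$ norm in the rescaled coordinate $\tilde x=\eps x$ (with $\partial_x=\eps\,\partial_{\tilde x}$). Your write-up is simply a detailed unpacking of that sentence, and the care you take about the $\eps$-scaling and the weight is exactly the content the paper summarizes as ``standard.''
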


\begin{proof}
Standard, using Moser's inequality, definition \eqref{Phi'}, 
the fact that $|\cdot|_{H^s_{\eps,\delta}}$ is a fixed weighted
norm in coordinates $\tilde x=\eps x$, and working in $\tilde x$
coordinates, with $\partial_x=\eps\partial_{\tilde x}$.
\end{proof}

\begin{prop}\label{invprop}
Under the assumptions of Theorem \ref{main},  
there are $\eps_0 > 0 $ and $\delta > 0$ such that
for all  $\eps \in ]0, \eps_0]$, 
for $\eps \in ]0, \eps_0]$, 
$\delta \in [0, \delta_0]$, 
equation \eqref{linpert} has a solution operator $\Psi^\eps(\tilde U)$ (i.e.,
there exists a formal right inverse for $(\Phi^\eps)'(\tilde U)$),
such that, for all $s$ such that
 $ s_0 + 2  \leq s + 1 \leq \bar s$, 
$s_0=3$,
$F=\begin{pmatrix} f\\g\end{pmatrix} \in F_s$,
and $U \in H^{s + r}_{\eps,\delta}$ such that
\begin{equation} \label{cond1first}
 | \tilde U |_{H^{s_0 + 2}_{\eps,\delta}} \le C\e,
 \end{equation}
there holds the estimate
 \begin{equation}\label{invbdHs}
\begin{aligned}
\big\|\csLd  F \big\|_{H^s_{\eps, \delta} }&\le 
C \eps^{-1}\big( 
\big\| \tilde U \|_{H^{s+1}_{\eps, \delta} }
\big| F |_{{s_0+2}}
+
\big\| F \|_{{s+1}}\big)\\
&=
C \eps^{-1}\big( 
\big\| \tilde U \|_{H^{s+1}_{\eps, \delta} }
(\big\| f \|_{H^{s_0+3}_{\eps, \delta} }
+
\big\| g \|_{H^{s_0+2}_{\eps, \delta} })
+
(\big\| F \|_{H^{s+2}_{\eps, \delta} }
+\big\| g \|_{H^{s+1}_{\eps, \delta} }  \big)),
\\
\end{aligned}
\end{equation}
where $C=C(|\tilde U|_{H^{s_0+2}_{\eps, \delta}})$ 
  is a non-decreasing function of $|\tilde U|_{H^{s_0+2}_{\eps, \delta}}$.
\end{prop}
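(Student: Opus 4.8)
\textbf{Proof strategy for Proposition \ref{invprop}.}
The plan is to construct the right inverse $\Psi^\eps(\tilde U)$ by exploiting the macro--micro (Chapman--Enskog) structure of the linearized operator $(\Phi^\eps)'(\tilde U)$ in \eqref{Phi'}, reducing the full $(n+r)$-dimensional system to an effective scalar (or small-dimensional) nondegenerate ODE in the macro variable $u$, for which explicit integration and decay estimates are available. First I would use the first block equation $A_{11}u + A_{12}v = f$, together with invertibility of $A_{22}$ near the equilibrium manifold (a consequence of \eqref{qassum} and the block structure \eqref{zerov}), to solve algebraically for the micro variable $v$ in terms of $u$, $u'$ and the data: schematically $v = -Q_{22}^{-1}A_{21}u' + (\text{lower order in }\eps) + (\text{contributions from } f,g)$. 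Substituting this into the second block equation produces a closed equation for $u$ of the form $b_*(u)$-type second-order operator $= $ (data), which by Assumption \ref{goodred} is nondegenerate in the coordinate $u_2 = (I-\pi_*)u$ and reduces, after one integration and projection with $\pi_*$, to a nondegenerate first-order ODE governed by the genuinely nonlinear field of Assumption \ref{profass}. Here the key point is that $\tilde U$ is $O(\eps)$-close (in the relevant norm by \eqref{cond1first}) to the Chapman--Enskog profile $\bar U_{CE}$, which is itself a small Lax-type shock of amplitude $\eps$, so the reduced ODE is a small perturbation of the known profile equation \eqref{NS}.

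The heart of the estimate \eqref{invbdHs} is a weighted energy/ODE estimate for this reduced scalar problem. The plan is to work in the rescaled variable $\tilde x = \eps x$, in which the coefficients are $O(1)$ smooth functions and the profile looks like an $O(1)$ shock; the weight $e^{\delta\eps\langle x\rangle} = e^{\delta\langle\tilde x\rangle}$ becomes an $O(1)$ exponential weight. For the nondegenerate scalar ODE associated to the genuinely nonlinear mode, one obtains the inverse by explicit integration against the (exponentially decaying) fundamental solution, picking up exactly one power of $\eps^{-1}$ from the fact that the ``effective viscosity'' $b_*$ is $O(\eps)$ relative to the convective term --- equivalently, the profile varies on the slow scale $\eps x$, so inverting the linearized profile operator costs a factor $\eps^{-1}$, consistent with the $\eps^{-1}$ prefactor in \eqref{invbdHs}. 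The transverse (nondegenerate elliptic) directions $u_2$ are handled by direct inversion of $a_*$ and $b_*$ with no loss, and the micro variable $v$ inherits its estimate from the algebraic relation, which is why $g$ enters at one less derivative than $f$ (it is differentiated once in \eqref{Phi'}) --- this accounts for the asymmetry $\|f\|_{H^{s+2}}$ versus $\|g\|_{H^{s+1}}$ on the right-hand side. The tame dependence on $\tilde U$ through $\|\tilde U\|_{H^{s+1}_{\eps,\delta}}$ times the low norm of $F$ comes from commuting $\partial_x^s$ past the variable coefficients and applying Moser's inequality, exactly as in Lemma \ref{tamederivs}, with the worst term being the one where all derivatives hit the coefficient $b(\bar U_{CE}+\tilde U)$ in \eqref{bdef}.

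I would organize the argument as follows: (1) set up the block elimination and record the algebraic solution for $v$ with its $\eps$-dependent bounds; (2) derive the reduced second-order system for $u$, integrate once, and split into the genuinely nonlinear scalar mode ($u_1 = \pi_* u$) and the nondegenerate transverse part ($u_2$); (3) solve the transverse part by direct inversion of the nondegenerate coefficients, losing no powers of $\eps$; (4) solve the scalar mode by explicit integration against the fundamental solution of the linearized profile ODE, using the Lax structure from Proposition \ref{NSprofbds} to get an exponentially decaying Green's kernel and extracting the single factor $\eps^{-1}$; (5) reassemble $U = (u,v)$ and propagate the weight $e^{\delta\eps\langle x\rangle}$ through each step, checking $\delta$ can be taken small uniformly in $\eps$; (6) run the high-norm tame estimate by differentiating, commuting, and invoking Moser, reducing everything to the base ($s = s_0$) estimate already obtained. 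The main obstacle I anticipate is step (4): controlling the scalar-mode inversion uniformly as the profile ODE degenerates (Assumption \ref{nondeg} is \emph{not} assumed), which is precisely where the slow scaling $\eps x$ and the careful bookkeeping of $\eps$-powers are essential, and where the single --- but unavoidable --- loss of $\eps^{-1}$ (together with the loss of one derivative hidden in using $u'$ to define $v$) originates; getting this loss to be exactly $\eps^{-1}$ and no worse, with constants independent of $\eps$ and $\delta$, is the crux and is what ultimately forces the Nash--Moser scheme rather than a plain contraction mapping.
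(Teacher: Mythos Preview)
Your Chapman--Enskog reduction in steps (2)--(4) matches the paper's Section~7, and the identification of the $\eps^{-1}$ loss with the near-zero eigenvalue of $\bar{df}_*$ is correct. However, there is a genuine gap in step~(1). You describe the elimination of $v$ as ``algebraic,'' but the second block row reads $A_{21}u' + A_{22}v' + bU - Q_{22}v = g$, so inverting $Q_{22}$ gives $v = Q_{22}^{-1}(A_{21}u' + A_{22}v' + bU - g)$ with $v'$ still on the right. This is not a lower-order remainder you can absorb: $v'$ carries the same regularity as $v$, and substituting into the first equation produces a reduced equation for $u$ with an $O(|v'|)$ error that your scheme never independently controls. (Your side remark about ``invertibility of $A_{22}$'' is also off---\eqref{qassum} concerns $Q_{22}=\partial_v q$, and since \eqref{nondeg} is \emph{not} assumed, $A$ and hence $A_{22}$ may well be singular.)

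The paper closes this gap by a separate Kawashima-type energy estimate (Section~6) carried out \emph{before} the Chapman--Enskog step. Using the symmetrizer $\cS = \partial_x^2 + \partial_x\circ K - \lambda$ with the skew compensator $K$ from Assumption~\ref{GC}, one obtains the internal bound
\[
\|U'\|_{L^2_{\eps,\delta}} + \|v\|_{L^2_{\eps,\delta}} \lesssim \|(f,f',f'',g,g')\|_{L^2_{\eps,\delta}} + \eps\|u\|_{L^2_{\eps,\delta}},
\]
and a differentiated version controlling $\|v'\|$. Only with this independent micro-variable control in hand does the reduced scalar estimate $\|u\|\lesssim \eps^{-1}(\|v'\| + \dots)$ close: one substitutes the energy bound for $\|v'\|$ and absorbs the resulting $\eps\|u\|$ on the left. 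Your outline has the macro half of the argument but is missing this micro half entirely; without it the loop does not close, regardless of how carefully you handle the scalar ODE in step~(4).
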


The proof of this proposition, carried out in 
Sections \ref{energy}--\ref{existence} is essentially identical to that
of the corresponding proposition (Prop. 5.2) of \cite{MZ2}
in the semilinear case.
Once it is established, existence and uniqueness follow by 
the abstract Nash--Moser theorems developed in
\cite{TZ}, reproduced for completeness in Appendix \ref{NM}.

\begin{proof}[Proof of Theorem \ref{main} (Existence)]
The profiles $\bar U_{CE}^N$ exist if $\eps$ is small enough. 
Comparing, we find that
Lemma \ref{tamederivs},  Proposition \ref{invprop},
and Lemma \ref{WKBapprox} verify, respectively, Assumptions \ref{ass1},
\ref{ass2}, and \ref{ass-wkb} of Appendix \ref{NM},
with $s_0=3$, $\gamma_0=0$, $\g=1$, $k=N+2$, $m=r=1$, $r'=0$, 
and arbitrary $\bar s$. 
Taking $\bar s$ sufficiently large, and applying
the Nash Moser Theorem \ref{th1} of Appendix \ref{NM}, we thus
obtain existence of a solution $U^\eps$ of \eqref{intpert}
with $|U^\eps|_{H^{s+1}_{\eps, \delta}}\le C\eps^{N+1}$.
Defining $\bar U^\eps:=\bar U_{CE}^N+U^\eps$, and
noting by Sobelev embedding that $|h|_{H^{s+1}_{\eps, \delta}}$
controls $| e^{\delta \eps |x|}h|_{L^\infty}$, we obtain the result.
\end{proof}

\begin{proof}[Proof of Theorem \ref{main} (Uniqueness)]
Applying Theorem \ref{th2}
for $s_0=3$, $\gamma_0=0$, $\g=1$, $k=3$, $m=r=1$, $r'=0$, 
we obtain uniqueness in a ball of radius $c\eps$ in 
$H^{4}_{\eps,0}$, 
$c>0$ sufficiently small,
under the additional phase condition \eqref{phasecond}.
We obtain unconditional uniqueness from this weaker version
by the observation that phase condition
\eqref{phasecond} may be achieved for any solution $\bar U=\bar U_{CE}+U$
with 
$$
\|U'\|_{L^\infty}\le c \eps^{2}<< \bar U_{CE}'(0)\sim \eps^2
$$
by translation in $x$, yielding
$\bar U_a(x):=\bar U(x+a)= \bar U_{CE}(x)+ U_a(x)$
with 
$$
U_a(x):= \bar U_{CE}(x+ a)-\bar U_{CE}(x)+ U(x+a) 
$$
so that, defining $\phi:=\bar U'/|\bar U'|$, 
we have
$\partial_a \langle \phi, U_a\rangle \sim 
\langle \phi, \bar U_{CE}' + U' \rangle
=\langle \phi, (1+o(1))\bar U' + U' \rangle
= (1+o(1)) |\bar U'|\sim \eps^2$
and so (by the Implicit Function Theorem applied to $h(a):=\eps^{-2}
\langle \phi, U_a\rangle$, together with the
fact that $\langle \phi, U_0\rangle = o(\eps)$ and
that $\langle \phi, \bar U_{NS}'\rangle\sim |\bar U_{NS}'|\sim \eps^{2}$)
the inner product $\langle \phi, U_a\rangle$, hence also
$\Pi U_a$
may be set to zero by appropriate choice of $a=o(\eps^{-1})$ leaving
$U_a$ in the same $o(\eps)$ neighborhood, by the computation
$U_a-U_0\sim \partial_a U \cdot a\sim o(\eps^{-1})\eps^2$.
\end{proof}

It remains to prove existence of the linearized solution
operator and the linearized bounds \eqref{invbdHs}, which
tasks will be the work of the rest of the paper.
We concentrate first on estimates, and prove the existence next, using 
a viscosity method.

\section{Internal and high frequency estimates}
 \label{energy}

We begin by establishing a priori estimates on solutions
of the equation \eqref{linpert}
This will be done in two stages.
In the first stage, carried out in
this section, we establish energy estimates
showing that ``microscopic'', or ``internal'', variables consisting
of $v $ and derivatives of $(u, v)$ are controlled by  and small 
with respect to the ``macroscopic'', or ``fluid'' variable, $u$.
In the second stage, carried out in Section \ref{linCEestimates}, we
estimate the macroscopic variable $u$ by Chapman--Enskog approximation
combined with finite-dimensional ODE techniques such as have been
used in the study of fluid-dynamical shocks \cite{MZ1,MaZ5,PZ,Z1}.

\subsection{The basic $H^1$ estimate} 
  
  We consider the equation
  \begin{equation}
  \label{inteqs6}
\begin{pmatrix} A_{11} u + A_{12} v  
  \\
   A_{21} u' + A_{22} v'  +bU -   Q_{22} v \end{pmatrix} = 
  \begin{pmatrix} f \\ g \end{pmatrix}  
  \end{equation}
   and its differentiated form: 
\begin{equation}\label{apriorieq}
(AU'- Q+b)U=
\begin{pmatrix} f'\\g \end{pmatrix},
\end{equation}
where $b=\tilde b(\bar U_{CE}^N)'$, and $A$, $Q$, $\tilde b$
are smooth functions of $\bar U_{CE}+\tilde U$, with
$\|\tilde U\|_{4}$,
$\|\bar U_{CE}^N\|_{s+1}$ both order $\eps$ (the first by assumption,
the second by estimates \eqref{higherNSbds}).
We shall freely use below the resulting coefficient bounds
\be\label{coeffests}
|\partial_x^{k+1} A|, \, 
|\partial_x^{k+1} Q|,\,
|\partial_x^{k+1} K|,\
\le C\eps^{2+k},
\quad
|\partial_x^k b|\le C\eps^{2+k}
\ee
for $0\le k\le 3$
and
\be\label{ecoeffests}
|\partial_x^{j+1} A|_{L^2}, \, 
|\partial_x^{j+1} Q|_{L^2}, \, 
|\partial_x^{j+1} K|_{L^2}
\le C\eps^{j+1/2}(\eps + \|\tilde U\|_{s+1}),
\quad
|\partial_x^{j} b|\le C\eps^{j+1/2}(\eps + \|\tilde U\|_{s+1})
\ee
for $0\le j\le s$, where $K$ is the Kawashima multiplier (a smooth
function of $A$ ).
The internal variables are $U' = (u', v')$ and $v$.


\begin{prop}\label{energypropL2}
Under the assumptions of Theorem \ref{main}, there  are  constants 
$C$, $\eps_0 > 0$ and $\delta_0 > 0$ such that for  $0 < \eps \le \eps_0$ and 
$0 \le \delta \le \delta_0$, 
$f \in H^{2}_{\eps, \delta} $, $g \in H^{1}_{\eps, \delta} $ 
and      $U= (u,v)\in H^1_{\eps, \delta}$ satisfying \eqref{inteqs6},
one has
  \begin{equation}\label{invbd}
\big\| U'   \big\|_{L^2_{\eps, \delta} }   + \big\| v   \big\|_{L^2_{\eps, \delta} }  \le 
C    \big( \big\| (f, f', f'', g, g') \|_{L^2_{\eps, \delta} }
 + \eps  \big\| u  \big\|_{L^2_{\eps, \delta} }  \big).
\end{equation}

\end{prop}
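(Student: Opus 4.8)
\textbf{Proof proposal for Proposition \ref{energypropL2}.}
The plan is to close an $H^1$-type energy estimate using the symmetrizer $S$ from Assumption \ref{SS} and the Kawashima multiplier $K$ from Assumption \ref{GC}, exploiting the fact that all coefficient derivatives are $O(\eps^2)$ by \eqref{coeffests}. First I would work with the differentiated equation \eqref{apriorieq}, i.e. $(A U' - (Q - b)) U = (f', g)^T$ where $Q$ here stands for the $dQ$-type term with the $(2,2)$ block $Q_{22}$, and take the (weighted) $L^2$ inner product with $S U$. Using symmetry of $SA$, the principal term $\langle SA U', U\rangle$ integrates by parts to $-\tfrac12\langle (SA)' U, U\rangle = O(\eps^2)\|U\|^2$, which is harmless. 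The term $-\langle S(Q-b) U, U\rangle$ contributes, via \eqref{zerov} and \eqref{rank}, a good negative definite term $\theta \|v\|^2$ on the microscopic component $v$, up to the $O(\eps^2)\|U\|^2$ error from $b$ and from the fact that $S\,dQ$ is only nonpositive (not negative) on the full $U$. This gives control of $\|v\|_{L^2_{\eps,\delta}}$ by $\|(f',g)\|$ plus $\eps\|U\|$ plus lower-order slack — but crucially it gives \emph{nothing} on $u'$, since $dQ$ annihilates the fluid block.

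To recover $u'$ (and $v'$) I would then add the Kawashima correction: take the inner product of \eqref{apriorieq} with $K U'$ (or integrate $\langle KU, U'\rangle$ by parts appropriately), where $K = K(U)$ is skew-symmetric and satisfies \eqref{K}. The combination $\Re(KA - S\,dQ) \ge \theta > 0$ then produces a full positive term $\theta(\|u'\|^2 + \|v\|^2)$ — more precisely $\theta\|U'\|^2$ modulo already-controlled quantities — at the cost of the term $\langle KU', \text{(forcing)}\rangle$ and commutator terms $\langle K' U \cdot U', U\rangle$, $\langle (KA)' U, U'\rangle$, all of which are $O(\eps^2)$-small in the coefficients and hence absorbable. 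Here is where the extra regularity on $f$ enters: the $\langle KU', f'\rangle$-type pairing, after integration by parts to move the derivative off $U'$, forces one to control $f''$, which is exactly why the hypothesis asks for $f \in H^2_{\eps,\delta}$ and $g \in H^1_{\eps,\delta}$; one pays one derivative on the $u$-equation forcing because it appears differentiated in \eqref{apriorieq}. I would organize the two inner products with carefully chosen relative weights (a large constant times the $S$-estimate plus the $K$-estimate, the standard Kawashima bookkeeping) so that the good terms dominate and every error term is either $O(\eps)\|U\|_{L^2}$, $O(\eps^2)\|U'\|_{L^2}$ (absorbed into the left side for $\eps$ small), or controlled by the forcing norm $\|(f,f',f'',g,g')\|_{L^2_{\eps,\delta}}$.

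The weighted norms $L^2_{\eps,\delta}$ add only routine extra terms: conjugating by $e^{\delta\eps\langle x\rangle}$ produces a commutator of size $O(\delta\eps)$ times $\|U\|$, which for $\delta\le\delta_0$ small is again absorbable (this is why $\delta_0$ must be chosen small, uniformly in $\eps$). One subtlety I would be careful about is that $\|v\|$ appears on the right of the bound for $\|U'\|$ via $v'$, and conversely, so the two estimates must be combined \emph{simultaneously} rather than sequentially — i.e. form one functional $\mathcal{E} = \Lambda\langle SU,U\rangle|_{\text{weighted}} + \langle KU, U\rangle|_{\text{weighted}}$ with $\Lambda$ large, differentiate/estimate once, and read off both $\|U'\|_{L^2_{\eps,\delta}}$ and $\|v\|_{L^2_{\eps,\delta}}$ together. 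The main obstacle, and the step deserving the most care, is the precise accounting of the forcing contributions: showing that after the integrations by parts needed to exploit skew-symmetry of $K$ and symmetry of $SA$, the derivatives landing on $f$ never exceed two and on $g$ never exceed one, so that the right-hand side is exactly $\|(f,f',f'',g,g')\|_{L^2_{\eps,\delta}}$ with no loss, and that the residual $\eps\|u\|_{L^2_{\eps,\delta}}$ term (the one genuinely uncontrolled quantity at this stage, to be handled later by the Chapman--Enskog/ODE analysis of Section \ref{linCEestimates}) enters with the stated power of $\eps$ and no worse.
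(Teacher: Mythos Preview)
Your overall strategy (Kawashima-type energy estimate with weight conjugation) is the right one, and matches the paper's in spirit. But there is a genuine gap in the combination you describe. The two pairings you list---with $SU$ and with $KU'$---are not enough to close. Pairing the (symmetrized) equation $AU'-QU=F$ with $KU'$ yields, for the principal term, the quadratic form $-(\Re(KA)U',U')$, together with a cross term $(QU,KU')$ involving only $\|v\|\,\|U'\|$. To extract $\theta\|U'\|^2$ from this you would need $\Re(KA)\ge\theta$ by itself, but the Kawashima condition only guarantees $\Re(KA-Q)\ge\theta$; since $\Re Q\le 0$ on the $v$-block, $\Re(KA)$ is in general \emph{indefinite}. (A $2\times2$ example: $A=\left(\begin{smallmatrix}0&1\\1&0\end{smallmatrix}\right)$, $Q=\left(\begin{smallmatrix}0&0\\0&-1\end{smallmatrix}\right)$, $K=\left(\begin{smallmatrix}0&1/2\\-1/2&0\end{smallmatrix}\right)$ gives $\Re(KA)=\mathrm{diag}(1/2,-1/2)$.) Concretely, your estimate (b) then reads $\theta\|U'\|^2 - C\|v'\|^2 \lesssim \|v\|\,\|U'\|+\|F\|\,\|U'\|$, and the $\|v'\|^2$ term cannot be absorbed using only your estimate (a), which controls $\|v\|$ but not $\|v'\|$.

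The missing ingredient is a third estimate at the $U'$ level: differentiate the equation once and pair with $SU'$ (equivalently, include a $\partial_x^2$ piece in the symmetrizer, as the paper does with $\cS=\partial_x^2+\partial_x\circ K-\lambda$). This produces the term $-(QU',U')\ge\theta'\|v'\|^2$, which combines with the $K$-term to give the full $((\Re(KA)-\Re Q)U',U')\ge\theta\|U'\|^2$. This is also the true source of the $f''$, $g'$ on the right-hand side: the forcing in the differentiated pairing is $F'=(f'',g')$. Your explanation that $f''$ arises from integrating $(KU',f')$ by parts is not right---that term is already harmless as $\|U'\|\,\|f'\|$ without any IBP; the extra derivative on $f$ genuinely comes from the $\partial_x^2$ (or differentiated-equation) part that your outline omits.
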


Multiplying by symmetrizer $S$ (block-diagonal, by assumption
\eqref{zerov}), 
we obtain an ODE
\begin{equation} \label{simplfeqtilde}
\widetilde  A \widetilde U'-\widetilde{ Q}\, \widetilde U 
+ \widetilde  C \widetilde U= \widetilde F,
\end{equation}
where 
\begin{equation}
\label{hateq}
\widetilde A=SA,\quad  \widetilde { Q}=SQ=
\begin{pmatrix}
0 & 0 \\
0 &\widetilde{Q}_{22} \end{pmatrix},
\end{equation}
with $ \Re \widetilde{Q}_{22}$ negative definite,
$\widetilde F=SF$,
and
\begin{equation}\label{Cest}
\widetilde  C=
O(\bar u_{CE}') \widehat C =O(\eps^2)\widehat C
\end{equation}
comprising commutator terms and $-S_{22}bU$.
\medbreak

We first prove the estimate \eqref{invbd} for $\delta = 0$. 
Dropping hats and tildes, the  ODE reads
\begin{equation}
\label{simplfeq}
AU' -   Q  U+\eps^2 CU = F, 
\quad
Q =
\begin{pmatrix}
0 & 0 \\
0 &   Q_{22}
\end{pmatrix},
\end{equation}
$A$ symmetric and $\Re Q_{22}$ negative definite.
Likewise, the genuine coupling condition still holds,
which, by the results of \cite{K}, is equivalent to the 
{\it Kawashima condition}, and 
there is a smooth $  K = \widetilde K(\bar u_{CE})= - \widetilde K^* $ such that 
$
\Re  (  K   A  -    S  {Q}) 
$ is definite positive. Therefore, there is  $c > 0$ such that for all 
$\eps \le \eps_0$ and $x \in \RR$: 
\begin{equation}
\label{infbds}
\tilde  q \le   - c \Id, \qquad 
\Re (  K   A -   S  {Q}) \ge c\Id.
\end{equation}

\begin{lem}
\label{lem62}
There is a constant $C$ such that for $\eps$ sufficiently small, 
$f \in H^2$, $\tilde U\in H^2$, $g \in H^1$, and  $U\in H^1$  
satisfying \eqref{simplfeq}, with $\|\tilde U\|_{2}\le C\eps$,
one has 
\begin{equation}\label{sharph1eq}
\| U'  \|_{L^2} + \| v \|_{L^2}  \le C \big(     
\|  f  \|_{H^2}    + \| g  \|_{H^1}   
+ \eps \| u \|_{L^2} \big) . 
\end{equation}
\end{lem}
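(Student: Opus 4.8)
The plan is to prove the $H^1$ estimate \eqref{sharph1eq} for the constant-free (hatted/tilded) ODE \eqref{simplfeq} by a Kawashima-type energy argument, carried out in the rescaled variable $\tilde x = \eps x$ where the coefficients vary slowly. First I would set up the two basic quadratic functionals: the symmetric-energy functional $\frac{d}{dx}\langle SA U, U\rangle$ (here $A$ is already symmetrized, so I really mean $\langle A U, U\rangle$ with $A$ symmetric), and the Kawashima compensating functional $\frac{d}{dx}\langle K U, U\rangle$ with $K=K^*\cdot$ skew. Pairing \eqref{simplfeq} with $U$ and integrating by parts, the symmetric part yields, after using $\Re Q_{22}\le -c\,\Id$, control of $\|v\|_{L^2}^2$ modulo (a) a commutator term $\int \langle A' U,U\rangle$ which is $O(\eps^2)\|U\|_{L^2}^2$ by \eqref{coeffests}, (b) the $\eps^2 C U$ term, also $O(\eps^2)\|U\|_{L^2}^2$, and (c) the forcing term $\int\langle F,U\rangle$. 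The key gain is that all the ``bad'' terms carry a factor $\eps^2$, which after the change to $\tilde x$ is a small parameter times the natural norm.

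Second, I would run the Kawashima step: pair \eqref{simplfeq} with $KU$ (equivalently differentiate $\langle KU,U\rangle$). The leading term is $\langle (KA)U',U\rangle$, and integrating by parts half of it produces $-\langle \Re(KA)' U,U\rangle/$-type lower order terms plus, crucially, after adding a suitable large multiple of the symmetric estimate, the combination $\Re(KA - SQ)\ge c\,\Id$ from \eqref{infbds} controls $\|u'\|_{L^2}^2$ (the $v'$ part being already handled, or handled together). More precisely the standard trick is: from the equation, $AU' = QU - \eps^2 CU + F$, so $U'$ is controlled by $\|v\|_{L^2} + \eps^2\|U\|_{L^2}+\|F\|_{L^2}$ \emph{on the range of $A$}, but $A$ may be singular (that is exactly the degeneracy the paper allows), so one cannot simply invert $A$; this is why the Kawashima multiplier is needed to recover $u'$. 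Combining the symmetric functional (times a large constant $M$) with the Kawashima functional gives $\frac{d}{dx}\mathcal{E}[U] \le -c(\|U'\|_{L^2}^2 + \|v\|_{L^2}^2) + C\eps^2\|U\|_{L^2}^2 + C\|F\|\cdot\|U\|$ pointwise-integrated; since $\mathcal{E}[U]\to 0$ at $\pm\infty$ (as $U\in H^1$), integrating in $x$ kills the left side's total derivative and yields $\|U'\|_{L^2}^2 + \|v\|_{L^2}^2 \lesssim \eps^2\|U\|_{L^2}^2 + \|F\|_{L^2}\|U\|_{L^2}$.

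Third, I need to absorb the $\|U\|_{L^2}^2 = \|u\|_{L^2}^2 + \|v\|_{L^2}^2$ on the right: the $\|v\|_{L^2}^2$ piece is absorbed into the left for $\eps$ small, and the cross term $\|F\|_{L^2}\|U\|_{L^2}$ is split by Young's inequality, again absorbing $\|U'\|, \|v\|$ pieces and leaving $\eps\|u\|_{L^2}$ and $\|F\|$ on the right — but here one must be careful: Young's on $\|F\|_{L^2}\|u\|_{L^2}$ leaves $\|u\|_{L^2}^2$ with an $O(1)$ constant, not absorbable. The resolution, and I expect this to be the main technical point, is that the $u$-component of the $\int\langle F,U\rangle$ pairing must be integrated by parts using the \emph{first} equation of \eqref{inteqs6} (the undifferentiated $f$, not $f'$): writing $u$ in terms of the first equation $A_{11}u + A_{12}v = f$ one trades a factor of $u$ for $f$, $v$, which is why the right-hand side of \eqref{sharph1eq} involves $\|f\|_{H^2}$ (two derivatives: one from working with the differentiated equation \eqref{apriorieq}, giving $f'$, and the pairing absorbing one more) and $\eps\|u\|_{L^2}$ only at top order. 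So I would carefully track where each $f$-derivative arises, pairing the differentiated equation against $KU$ and $U$, and pairing the undifferentiated equation against terms needed to close the $u$-estimate, collecting exactly $\|f\|_{H^2} + \|g\|_{H^1} + \eps\|u\|_{L^2}$.

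The main obstacle, as flagged, is the bookkeeping that produces precisely the claimed norms — in particular getting the factor $\eps$ (not $\eps^0$) in front of $\|u\|_{L^2}$, which is what makes the estimate useful for closing the outer Chapman–Enskog/ODE argument in Section~\ref{linCEestimates}; this forces one to be disciplined about which equation (differentiated vs. not) is paired against which multiplier ($U$, $KU$, or $U'$), and to use the coefficient bounds \eqref{coeffests} with their $\eps^{2+k}$ gain at every commutator. The passage from $\delta=0$ to $\delta>0$ is then routine: conjugate by $e^{\delta\eps\langle x\rangle}$, note the extra terms are $O(\delta\eps)$ relative to the good terms (since $|\partial_x e^{\delta\eps\langle x\rangle}|\le \delta\eps\, e^{\delta\eps\langle x\rangle}$), and absorb them for $\delta\le\delta_0$ small, exactly as in the semilinear case \cite{MZ2}; I would state this at the end and refer back to the weight-norm equivalence already noted after \eqref{defwnorm}.
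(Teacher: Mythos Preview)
Your proposal has a genuine gap at the level of the multiplier choice. The ``Kawashima functional'' $\langle KU,U\rangle$ you introduce is identically zero, since $K$ is skew-symmetric; differentiating it produces nothing. More broadly, for the \emph{stationary} ODE $AU'-QU=F$, pairing with the zeroth-order multipliers $U$ and $KU$ cannot yield control of $\|U'\|_{L^2}^2$: the pairing $(AU',U)$ gives only a total derivative plus $O(\eps^2)\|U\|^2$ commutators, and $(AU',KU)$ produces cross terms $\int U^T M U'$ rather than a coercive $\|U'\|^2$. The Kawashima mechanism you have in mind is the time-dependent one, where $\partial_t$ supplies the extra derivative; here there is no $\partial_t$, so the multiplier itself must be one order higher. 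The paper uses the second-order symmetrizer
\[
\cS=\partial_x^2+\partial_x\circ K-\lambda,
\]
for which $\Re\,\cS\circ(A\partial_x-Q)=\partial_x\circ(\Re KA - Q)\circ\partial_x+\lambda Q+\{\hbox{$O(\eps^2)$ commutators}\}$; the first block is $\ge c\,\partial_x^*\partial_x$ by \eqref{infbds} and the second gives $\lambda c\|v\|^2$, yielding $\|U'\|^2+\|v\|^2$ simultaneously.

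Your proposed mechanism for the crucial factor $\eps$ in front of $\|u\|_{L^2}$ is also off. Using the undifferentiated first equation $A_{11}u+A_{12}v=f$ to ``trade $u$ for $f,v$'' would require inverting $A_{11}=\partial_u f$, but $A_{11}(u_0,0)=df_*(u_0)$ has a zero eigenvalue by Assumption~\ref{profass}, so this fails precisely at the shock. In the paper's argument the $\eps\|u\|$ arises for a different, structural reason: with the multiplier $\cS$, the only pairing that touches bare $u$ is the $-\lambda(F,U)$ piece, and the first component of $F$ in \eqref{simplfeq} is (essentially) $f'$, so $-\lambda(F_1,u)=-\lambda(S_{11}f',u)$ integrates by parts to $\lambda(f,S_{11}u')+O(\eps^2)\|f\|\|u\|$ (the commutator $(S_{11})'=O(\eps^2)$). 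Every other occurrence of $\|u\|$ comes from coefficient derivatives $A',Q',(KA)'$, all $O(\eps^2)$ by \eqref{coeffests}. After Young, this leaves exactly $\eps^2\|u\|_{L^2}^2$ on the right, hence $\eps\|u\|_{L^2}$ in \eqref{sharph1eq}. Your remark on the weighted case ($\delta>0$) is correct.
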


\begin{proof}
 
 Introduce the symmetrizer
\begin{equation} 
\label{def615}
\cS = \D_x^2    + \D_x \circ  K   -  \lambda   .  
\end{equation}
One has 
$$
\begin{aligned}
& \Re \D_x^2    \circ (A\D_x  -      Q)  =    \mez \D_x \circ  A' \circ \D_x  -     \D_x   \circ   Q \circ    \D_x  -  \Re \D_x  \circ  Q'   
\\
& \Re \D_x \circ K  (A\D_x -  Q)  =  \D_x \circ  \Re KA \circ  \D_x  -   \re \D_x \circ K  Q 
\\
&  \Re   (   A\D_x -    Q )    = \mez   A'   -    Q .
\end{aligned}
$$
Thus 
$$
\begin{aligned}
\Re \cS  \circ ( A\D_x -    Q)  =  & \D_x \circ (\Re AK  -    Q) \circ \D_x   +   \lambda    Q  
\\ 
& + \mez \D_x  \circ A' \circ \D_x -  \mez  \lambda   A'   
-   \Re \D_x \circ  Q' -   \Re \D_x \circ K  Q. 
\end{aligned}
$$
Therefore, for  $U \in H^2 (\RR)$,  \eqref{infbds} implies that 
$$
\begin{aligned}
   \Re ( \cS   F , U)_{L^2}   \ge & \ c \| \D_x U \|^2_{L^2} + 
\lambda c  \|   v \|^2_{L^2} 
\\ &- \mez \| ( A)'  \|_{L^\infty}  \big( \| \D_x U \|^2_{L^2} + \lambda  \|  U \|^2_{L^2}\big) 
\\
& - \| (Q)' \|_{L^\infty} \|  U \|_{L^2} \| \D_x U \|_{L^2}  - \| K   \|_{L^\infty} \|\D_x  U \|_{L^2} 
\|  q v  \|_{L^2} \\
&-\eps^2(|C|_{L^\infty}|U|_{H^1}^2
+|C'|_{L^2}|U|_{L}^\infty). 
\end{aligned}
$$
Taking
 $$
\lambda =  \frac{2}{c} \| K  \|^2_{L^\infty} \| q \|_{L^\infty} ,
$$
and using that 
\begin{equation}
\label{commest}
  \| ( A)'  \|_{L^\infty} + \| (Q)'  \|_{L^\infty} = O( \eps^2),
\quad  
  \| ( C)'  \|_{L^2} \sim \| (A)''  \|_{L^2}\sim 
\eps^{3/2}(\eps + \|\tilde U\|_{2})=O(\eps^{5/2})
\end{equation}
and $|U|_{L^\infty}\le 
\|U\|_{1}=\eps^{-1/2}\|U\|_{L^2}+\eps^{1/2} \|U\|_{H^1}$,
yields
$$
\| U' \|^2_{L^2} + \| v \|_{L^2}^2 \lesssim     \Re ( \cS   F , U)_{L^2}   + 
\eps^2 \big( \| U \|^2_{L^2} +  \| U' \|^2_{L^2} \big).  
$$

In the opposite direction, 
$$
\begin{aligned}
   \Re ( \cS F , U)_{L^2}   \le      & \| \D_x  U \|_{L^2} \big( 
 \| \D_x (F) \|_{L^2}  + \| K \|_{L^\infty} \| F \|_{L^2} \big) 
\\
& + \lambda \big( \|  ( u) '  \|_{L^2}   \|  f \|_{L^2}  
 +   \|  v \|_{L^2}   \| g  \|_{L^2} \big).  
\end{aligned}
$$
Using again    
that the derivatives of the coefficients are $O(\eps^2)$,  this implies  that 
\begin{equation*} 
\begin{aligned}
 \Re ( \cS F , U)_{L^2}  \lesssim  & 
 \big(   
\|  f  \|_{H^2}   +    \| g \|_{H^1} \big) \| U' \|_{L^2}  
\\ 
 &+      \eps^2 \| f \|_{L^2} \| u \|_{L^2}  + \| g \|_{L^2} \| v \|_{L^2},
\end{aligned}
\end{equation*}
The estimate \eqref{sharph1eq}  follows provided that $\eps$  is small enough. 

This proves the lemma under the additional assumption that $U \in H^2$. 
When $U\in H^1$, the estimates follows using Friedrichs mollifiers. 
\end{proof}

\begin{proof}[Proof of Proposition~\ref{energypropL2}]
This follows similarly as in the proof of Lemma \ref{lem62},
making the change of variables $U\to e^{\delta \eps |x|}U$
and absorbing commutators.  See the proof of Proposition 6.1,
\cite{MZ2}.
\end{proof}

\subsection{Higher order estimates} 
\begin{prop}\label{estHs}  There are constants $C$, $\eps_0 > 0$, $\delta_0 > 0$ and for 
all $k \ge 2$, there is  $C_k$, such that 
 $0< \eps \le \eps_0$, $\delta \le \delta_0$,  
 $U \in H^s_{\eps, \delta}$, 
 $\tilde U \in H^{s+1}_{\eps, \delta}$,
  $f \in H^{s+1}_{\eps, \delta}$ and $g \in H^{s}_{\eps, \delta}$ 
  satisfying \eqref{simplfeq}, 
with $\|\tilde U\|_{H^2_{\eps, \delta}}\le C\eps$, there holds
\begin{equation}\label{sharph2eq}
\begin{aligned}
  \| \D_x^k U' \|_{L^2_{\eps, \delta}} + 
&  \| \partial^k_x v \|_{L^2_{\eps, \delta}}  
  \le  C  
  \| \partial_x^k (f, f', f'', g, g')  \|_{L^2_{\eps, \delta} }     \\
  &  +    \eps^k  C_k \big( 
 \| U' \|_{H^{k-1}_{\eps, \delta} } + \eps   \| v \|_{H^{k-1}_{\eps, \delta} }  +
  \eps \| u \|_{L^2_{\eps, \delta}}\big)  \\
&\quad +
C_k\eps^{k+1}\|\tilde U\|_{H^{k+2}_{\eps,\delta}}
(  \|v \|_{H^{1}_{\eps, \delta}} +
\eps\|U \|_{H^{2}_{\eps, \delta}} ).
\end{aligned}
\end{equation}
\end{prop}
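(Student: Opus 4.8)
The plan is to prove Proposition~\ref{estHs} by induction on $k$, differentiating the symmetrized ODE \eqref{simplfeq} $k$ times and running the same $\cS$-symmetrizer energy estimate used in the proof of Lemma~\ref{lem62}, now applied to $\partial_x^k U$ in place of $U$. Applying $\partial_x^k$ to \eqref{simplfeq} gives
\begin{equation*}
A(\partial_x^k U)' - Q(\partial_x^k U) + \eps^2 C(\partial_x^k U) = \partial_x^k F - [\partial_x^k, A]U' + [\partial_x^k, Q]U - \eps^2[\partial_x^k,C]U,
\end{equation*}
so the differentiated unknown $\partial_x^k U$ solves the same type of ODE with a modified forcing consisting of $\partial_x^k F$ plus commutator terms. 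First I would pair this with $\cS(\partial_x^k U)$ exactly as in Lemma~\ref{lem62}: the lower bound produces $c\|\partial_x^{k+1}U\|_{L^2_{\eps,\delta}}^2 + \lambda c\|\partial_x^k v\|_{L^2_{\eps,\delta}}^2$ minus error terms controlled by the $O(\eps^2)$ size of the coefficient derivatives \eqref{coeffests}, and the upper bound gives $\|\partial_x^k U'\|_{L^2_{\eps,\delta}}$ times the norm of the modified forcing. The weight $e^{\delta\eps\langle x\rangle}$ is carried through as in the proof of Proposition~\ref{energypropL2}, producing only harmless $O(\delta\eps)$ corrections that are absorbed.

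The bulk of the work is bounding the commutators in the weighted norms. The term $[\partial_x^k, A]U'$ expands into $\sum_{j=1}^{k}\binom{k}{j}(\partial_x^j A)(\partial_x^{k-j}U')$; by Moser-type (Gagliardo–Nirenberg) interpolation in the $\tilde x=\eps x$ variables, combined with the coefficient bounds \eqref{coeffests} giving $|\partial_x^{j}A|_{L^\infty}\ls \eps^{1+j}$ for $1\le j\le 4$ and the $L^2$ bounds \eqref{ecoeffests} giving $|\partial_x^{j}A|_{L^2}\ls \eps^{j-1/2}(\eps+\|\tilde U\|_{s+1})$ for higher $j$, one splits the sum into a ``low-derivative-on-coefficient'' part — which costs $\eps^k$ times $\|U'\|_{H^{k-1}_{\eps,\delta}}$ and hence feeds the second line of \eqref{sharph2eq} — and a ``high-derivative-on-coefficient'' part, which is where the factor $\|\tilde U\|_{H^{k+2}_{\eps,\delta}}$ in the third line of \eqref{sharph2eq} comes from (the top derivative $\partial_x^{k+1}$ landing on $\tilde U$ through $A=A(\bar U_{CE}+\tilde U)$, $Q$, and $b$). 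The term $\eps^2[\partial_x^k,C]U$ and the $b$-contribution are handled the same way, noting $b=\tilde b(\bar U_{CE}^N)'$ contributes the $\eps^{k+1}\|\tilde U\|_{H^{k+2}_{\eps,\delta}}(\cdots)$ piece as well. Finally the new $\eps^{k+1}$ lower-order terms $\eps^k C_k(\|U'\|_{H^{k-1}_{\eps,\delta}}+\eps\|v\|_{H^{k-1}_{\eps,\delta}}+\eps\|u\|_{L^2_{\eps,\delta}})$ are exactly the accumulated contributions of all commutators with at least one derivative on the coefficient, and they are acceptable because in the Nash–Moser scheme they are multiplied by a positive power of $\eps$ and by lower-order norms of $U$ that will already have been estimated at the previous induction step.

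The main obstacle, as in the semilinear case of \cite{MZ2}, is the bookkeeping of $\eps$ powers: one must check that every commutator term genuinely carries the stated power of $\eps$ (this is why the norms $H^s_{\eps,\delta}$ built around the $\tilde x=\eps x$ scaling are used — in those coordinates $\partial_x = \eps\partial_{\tilde x}$ and the coefficients are $O(\eps)$ functions of $\tilde x$, so each derivative hitting a coefficient is worth an extra $\eps$), and in particular that the top-order term $\|\tilde U\|_{H^{k+2}_{\eps,\delta}}$ appears multiplied by $\eps^{k+1}$ and by small norms of $U$, so that it can be absorbed in the Nash–Moser iteration rather than causing an uncontrolled loss. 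The genuinely quasilinear feature — that $b$ depends on $(\bar U_{CE}+\tilde U)'$ — is what forces the appearance of $\|\tilde U\|_{H^{k+2}_{\eps,\delta}}$ (one more derivative than $\|\tilde U\|_{H^{k+1}_{\eps,\delta}}$, hence the loss of one derivative that necessitates Nash–Moser); checking that this is the only place the loss occurs, and that it comes with the compensating $\eps$ power, is the crux. Once the commutator estimates are in place, the proof closes by choosing $\eps_0$ small enough to absorb the $\eps^2(\|\partial_x^{k+1}U\|_{L^2_{\eps,\delta}}^2+\|\partial_x^k U\|_{L^2_{\eps,\delta}}^2)$ error on the left, as in Lemma~\ref{lem62}, and then summing the induction.
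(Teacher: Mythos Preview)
Your proposal is correct and follows essentially the same approach as the paper: differentiate the equation $k$ times, apply the basic $H^1$ energy estimate to the differentiated unknown $\partial_x^k U$, and control the resulting commutator forcing via Moser's inequality combined with the coefficient bounds \eqref{coeffests}--\eqref{ecoeffests}. The only cosmetic difference is that the paper invokes Proposition~\ref{energypropL2} as a black box on the differentiated equation rather than re-running the $\cS$-symmetrizer computation, and it does not phrase the argument as an induction on $k$ (the lower-order norms $\|U'\|_{H^{k-1}_{\eps,\delta}}$, $\|v\|_{H^{k-1}_{\eps,\delta}}$ are simply left on the right-hand side; the induction you allude to is deferred to Proposition~\ref{prop73}).
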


\begin{proof} 
Differentiating \eqref{inteqs6} $k$ times, yields
\begin{equation}\label{Dapriorieq}
A\partial_x^{k}U - Q\partial_x^{k}U =
\begin{pmatrix} \partial^k_x f'    \\ \partial^{k}_xg +   r_k   \end{pmatrix},
\end{equation}
where 
$$
r_k  =  - 
\partial_x^{k-1} \big((\partial_x  Q_{22})  v\big)  
- \partial^{k-1}_x \big( (\partial_x A) \,  \D_{x} U\big)
- \partial^{k-1}_x \big(  (\partial_x C) \,  U\big).
$$
The   $H^1$ estimate yields
$$
\begin{aligned}
\| \D_x^k U' \|_{L^2_{\eps, \delta}} + 
 \| \partial^k_x  v   \|_{L^2_{\eps, \delta}}   \le  
  C  \big( & \| \partial_x^k (f, f', f'', g, g')   \|_{L^2_{\eps, \delta}}     \\
     + \eps  \| \partial_x^k  u \|_{L^2_{\eps, \delta}}  
 & +   \| \partial_x r_k \|_{L^2_{\eps, \delta}} 
+   \| r_k  \|_{L^2_{\eps, \delta}} \big) , 
\end{aligned}
$$
for $0 \le k \le s$, with  $r_0 = 0$ when $k = 0$. 

Using Moser's inequality together with 
\eqref{coeffests} and \eqref{ecoeffests}, we may
estimate
$$
\begin{aligned}
   \| r_k  \|_{L^2_{\eps, \delta}} &\le C_k
( |\partial_x Q|_{L^\infty}\|\partial_x^{k-1} v\|_{L^2} +
|\partial_x^k Q|_{L^2}\|v\|_{L^\infty} \\
&\quad +
|\partial_x A|_{L^\infty}\|\partial_x^k U\|_{L^2} +
|\partial_x^k A|_{L^2}\|\partial_x U\|_{L^\infty} \\
&\quad +
|\partial_x C|_{L^\infty}\|\partial_x^{k-1} U\|_{L^2} +
|\partial_x^k C|_{L^2}\| U\|_{L^\infty} 
)
\\
&\le
C_k( \eps^{k+1}
\|v \|_{H^{k-1}_{\eps, \delta}} +
\eps^{k+2}\|U \|_{H^{k-1}_{\eps, \delta}} +
\eps^2 \|\partial_x^k U \|_{L^{2}_{\eps, \delta}}
)\\
&\quad +
C_k( \eps^{k+1}
\|\tilde U\|_{H^k_{\eps,\delta}}
\|v \|_{H^{1}_{\eps, \delta}} +
\eps^{k+2}\|\tilde U\|_{H^k_{\eps,\delta}}\|U \|_{H^{2}_{\eps, \delta}} +
\eps^{k+2}\|\tilde U\|_{H^{k+1}_{\eps,\delta}}\|U \|_{H^{1}_{\eps, \delta}} 
),
\end{aligned}
$$
obtaining the result by absorbing (smaller) 
highest-order terms from $\|\partial_x r_k\|_{L^2_{\eps,\delta}}$ 
on the left-hand side.

\end{proof}


\section{Linearized Chapman--Enskog estimate} \label{linCEestimates}

\subsection{The approximate equations}

It remains only to estimate $\|u\|_{L^2_{\eps, \delta}}$      in order to close the estimates
and establish \eqref{invbd}.
To this end, we work with the first equation  in 
\eqref{inteqs6}
and  estimate it by comparison with the Chapman-Enskog 
approximation (see the computations Section~\ref{CEapprox}). 
  
From the second equation
$$
A_{21}u'+A_{22}v' -g= dq_v v,
$$
we find 
\begin{equation}
\label{T2s7}
v=\partial_v q^{-1}
\Big((A_{21} + A_{22}\partial_v dv_* (\bar u_{CE}))u ' +A_{22} v  ' -g
\Big). 
\end{equation}
Introducing $v$ in the first equation, yields 
$$
(A_{11} + A_{12} d v_* (\bar u_{CE} ) ) u  + A_{12} v =  f,  
$$ 
 thus
$$
(A_{11} + A_{12} dv_* (\bar u_{CE}) ) u' =  f' - A_{12} v' -
d^2 v_* (\bar u_{CE}) (\bar u'_{CE}, u) . 
$$
 Therefore, \eqref{T2s7} can be modified to 
 \begin{equation}
 \label{T2bs7}
 v  =  c_* (\bar u_{CE}) u'   +   r 
\end{equation}
 with 
 $$
 \begin{aligned}
 r = d^{-1}_vq (\bar u_{CE}, &v_* (\bar u_{CE})) \Big(  A_{22}(v)' -g
 \\
& + dv_* (\bar u_{CE}) \big(  f' - A_{12} v' -
d^2 v_* (\bar u_{CE}) (\bar u'_{CE}, u)\big) \Big) . 
 \end{aligned}
 $$
This implies that $u$ satisfies the linearized profile equation
\begin{equation}\label{intpertu}
\begin{aligned}
\bar b_* u'- \bar {df}_* u  =   A_{12} r  - f 
\end{aligned}
\end{equation}
where $\bar b_*=b_*(\bar u_{CE})$ 
and $\bar {df}_{*} := df_*(\bar u_{CE}) = A_{11} + A_{12} dv_* (\bar u_{CE})$.


\subsection{$L^2$ estimates and proof   of the main estimates}

\begin{prop}\label{uprop}
For $\|\tilde U\|_{4}\le C\eps$,
the operator $( \bar b_*\partial_x -\bar{df}_*)(\tilde U)$ has
a right inverse $(b_*\partial_x -df^*)^{\dagger}$ 
\begin{equation}\label{rightinv}
\|(\bar b_*\partial_x -\bar {df}_*)^{\dagger}h\|_{L^2_{\eps, \delta}} \le 
C\eps^{-1}\|h\|_{L^2_{\eps, \delta}},
\end{equation}
uniquely specified by the property that the solution 
$u = (b_*\partial_x -df^*)^{\dagger} h$  satisfies 
\begin{equation}\label{phase}  
\ell_\eps  \cdot u(0) =0. 
\end{equation}
for certain unit vector $\ell_\eps$. 
\end{prop}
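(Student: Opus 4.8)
The plan is to treat \eqref{intpertu} as a nondegenerate first-order ODE in the coordinates $u=(u_1,u_2)$ adapted to $\ker b_*$, exactly as in the weak-shock theory of fluid-dynamical profiles, and to build the right inverse from a variation-of-constants formula that respects the $\eps$-scaling of the norms $L^2_{\eps,\delta}$. Recall from Assumption \ref{goodred} that $b_*$ has constant left kernel with projector $\pi_*$, so writing $u_1=\pi_* u$, $u_2=(I-\pi_*)u$, the $(I-\pi_*)$-component of $\bar b_* u' - \bar{df}_* u = h$ is algebraic (no derivative of $u_2$ survives in the top-order part that is killed by $\pi_*$), and by invertibility of $a_*=\pi_* df_*\pi_*|_{\ker b_*}$ one may solve for $u_1$ in terms of $u_2$, $u_2'$, and $h$, reducing the system to a genuine ODE $u_2' = M(\eps x) u_2 + (\text{forcing built from }h)$ whose coefficient $M$ is a smooth function of $\bar u_{CE}$, hence of $\eps x$, and whose limiting endpoint matrices $M(\pm\infty)$ are hyperbolic by the Lax condition recorded in Proposition \ref{NSprofbds} (unstable/stable dimensions summing to $n+1$).

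The next step is to invoke the conjugation/gap lemma machinery (as in \cite{MaZ3,Z1,GMWZ}) to obtain, on each half-line $x\gtrless 0$, the stable/unstable subspaces of the asymptotic ODE and the associated exponential dichotomy with rate $\theta\eps$ (the $\eps$ coming from the $\eps x$ scaling of $M$). Because the profile is of Lax type, the stable subspace at $+\infty$ and the unstable subspace at $-\infty$ together span a codimension-one subspace of the solution space, so a bounded solution of the inhomogeneous equation exists and is unique up to adding the single bounded homogeneous solution (the translational mode $\bar u_{CE}'$); one pins down the free constant by imposing the scalar condition \eqref{phase}, $\ell_\eps\cdot u(0)=0$, with $\ell_\eps$ chosen transverse to the translational mode (e.g. dual to $\bar u_{CE}'(0)$, suitably normalized), so that the Implicit-Function-type solvability is uniform in $\eps$. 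This produces $(\bar b_*\partial_x - \bar{df}_*)^\dagger h$ as an explicit integral operator against the dichotomy Green kernel.

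For the bound \eqref{rightinv}, the point is bookkeeping in $\eps$: the Green kernel decays at rate $\theta\eps|x-y|$, so its $L^1_y$ norm is $O(\eps^{-1})$, which by Young's inequality in the weighted space gives exactly the stated loss of one power of $\eps$ when passing from $\|h\|_{L^2_{\eps,\delta}}$ to $\|u\|_{L^2_{\eps,\delta}}$; the weight $e^{\delta\eps\langle x\rangle}$ is absorbed since $\delta$ is taken smaller than $\theta$, and the algebraic recovery of $u_1=O(u_2,u_2',h)$ costs nothing further because $u_2' = Mu_2 + \dots$ trades a derivative for a factor $\eps$ in these norms. The hypothesis $\|\tilde U\|_4\le C\eps$ enters only to guarantee that the coefficients of the ODE are $C\eps$-close to those of the exact Chapman--Enskog profile, so the dichotomy and its rates persist by a perturbation argument; this is where one must be slightly careful, since $\tilde U$ is not itself a function of $\eps x$, but the bound $\|\tilde U\|_4\le C\eps$ together with Sobolev embedding controls $\tilde U$ and its first few derivatives pointwise by $O(\eps^2)$, small compared to the $O(\eps)$ coefficients, so the perturbation is harmless.

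The main obstacle I anticipate is not the existence of the inverse but the uniformity of all constants as $\eps\to 0$: one is applying ODE dichotomy theory to a family of operators whose natural scale collapses, and one must verify that the gap lemma estimates, the transversality of the Lax subspaces, and the normalization of $\ell_\eps$ all degrade at most polynomially in $\eps$ — in fact that the only loss is the single factor $\eps^{-1}$ from the kernel's $L^1$ mass. Getting this cleanly is exactly the content of working throughout in the rescaled variable $\tilde x=\eps x$ and the norms $H^s_{\eps,\delta}$, and it is the reason the proof parallels, rather than merely cites, the fluid-dynamical case; since the structure is identical to \cite{MZ2}, I would carry out the reduction explicitly and then refer to the corresponding estimate there for the dichotomy bounds.
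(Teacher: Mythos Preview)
Your approach is correct and essentially identical to the paper's: reduce via Assumption \ref{goodred} to a nondegenerate ODE in $u_2$, invoke gap/reduction lemma dichotomies with rate $\theta\eps$, use the Lax index to count the one-dimensional kernel (the translational mode) and fix it by \eqref{phase}, then read off the $\eps^{-1}$ loss from the $L^1$ mass of the Green kernel; the paper's own proof is a one-line citation to exactly this argument in \cite{MZ2}, with the remark that $\|\tilde U\|_{H^4_{\eps,\delta}}\le C\eps$ supplies the coefficient control. One small slip: it is the $\pi_*$-component of the equation that is algebraic (since $\pi_* b_*=0$), not the $(I-\pi_*)$-component --- your subsequent sentence, solving for $u_1$ via invertibility of $a_*$, shows you have the right picture despite the mislabel.
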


\begin{proof}
Standard asymptotic ODE techniques, using the gap and reduction
lemmas of \cite{MZ1,MaZ3,PZ}, where the assumption
$\|\tilde U\|_{H^4_{\eps,\delta}}\le C\eps$ gives the
needed control on coefficients; 
see the proof of Proposition 7.1, \cite{MZ2}.
\end{proof}
 
 \begin{prop}
 \label{prop72}
There are  constants $C$,  $\eps_0 > 0 $ and  $\delta_0 > 0$   
such that 
for $\eps \in ]0, \eps_0]$, $\delta \in [0, \delta_0]$,       
$f \in H^{3}_{\eps, \delta} $, $g \in H^{2}_{\eps, \delta} $ and 
$U \in H^2_{\eps, \delta}$ satisfying  
  \eqref{linpert} and \eqref{phase}  
  \begin{equation}
  \label{invbdH2s7}
\big\| U \big\|_{H^2_{\eps, \delta} }\le 
C\eps^{-1}\big( \big\| f \|_{H^{3}_{\eps, \delta} }
+ \big\|g  \big\|_{H^2_{\eps, \delta} }\big).
\end{equation}
\end{prop}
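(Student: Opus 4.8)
The plan is to combine the higher-order internal estimate of Proposition~\ref{estHs} with the linearized Chapman--Enskog estimate, solving a fixed-point-type inequality for $\|U\|_{H^2_{\eps,\delta}}$. First I would record the $L^2$ estimate for the macroscopic variable: applying Proposition~\ref{uprop} to the linearized profile equation \eqref{intpertu}, we get $\|u\|_{L^2_{\eps,\delta}}\le C\eps^{-1}\|A_{12}r - f\|_{L^2_{\eps,\delta}}$. The key point is to control the remainder $r$ from \eqref{T2bs7}: its terms involve $A_{22}v'$, $g$, and $dv_*(\bar u_{CE})(f' - A_{12}v' - d^2 v_*(\bar u_{CE})(\bar u'_{CE},u))$, with coefficients bounded by \eqref{coeffests} (so $|\bar u'_{CE}| = O(\eps^2)$). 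Hence $\|A_{12}r\|_{L^2_{\eps,\delta}}\lesssim \|v'\|_{L^2_{\eps,\delta}} + \|g\|_{L^2_{\eps,\delta}} + \|f'\|_{L^2_{\eps,\delta}} + \eps^2\|u\|_{L^2_{\eps,\delta}}$, so that after absorbing the $\eps^2\|u\|$ term into the left side (using the $\eps^{-1}$ prefactor, which leaves a harmless $O(\eps)$ coefficient)
\[
\|u\|_{L^2_{\eps,\delta}}\le C\eps^{-1}\big(\|v'\|_{L^2_{\eps,\delta}} + \|f\|_{H^1_{\eps,\delta}} + \|g\|_{L^2_{\eps,\delta}}\big).
\]

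Next I would feed this back into the internal estimates. Proposition~\ref{energypropL2} gives $\|U'\|_{L^2_{\eps,\delta}} + \|v\|_{L^2_{\eps,\delta}}\le C(\|(f,f',f'',g,g')\|_{L^2_{\eps,\delta}} + \eps\|u\|_{L^2_{\eps,\delta}})$, and Proposition~\ref{estHs} with $k=1,2$ gives the analogous control on $\|\partial_x^k U'\|_{L^2_{\eps,\delta}} + \|\partial_x^k v\|_{L^2_{\eps,\delta}}$ in terms of $\|\partial_x^k(f,f',f'',g,g')\|_{L^2_{\eps,\delta}}$ plus $\eps^k$ times lower-order $U$-norms plus terms carrying an extra power of $\eps$ multiplying $\|\tilde U\|_{H^{k+2}_{\eps,\delta}}$ (which is $O(\eps)$ by the hypothesis $\|\tilde U\|_4\le C\eps$ --- here I would need $\bar s$ large enough that $\|\tilde U\|_{H^4_{\eps,\delta}}$ dominates these). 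Summing the $k=0,1,2$ internal estimates and substituting the bound on $\|u\|_{L^2_{\eps,\delta}}$ from the previous paragraph, the right side contains $\eps\cdot\eps^{-1}\|v'\|_{L^2_{\eps,\delta}} = \|v'\|_{L^2_{\eps,\delta}}$, which is \emph{not} small; however this contributes only to the $\|U\|_{H^1_{\eps,\delta}}$-level norm, while the left side controls $\|v\|_{H^2_{\eps,\delta}}$ and $\|U'\|_{H^2_{\eps,\delta}}$, and the $\eps^k$-weighted lower-order terms (for $k=1,2$) and the $O(\eps)$ factors let one close the loop by absorbing all genuinely lower-order contributions into a smaller multiple of the highest-order left side.

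The main obstacle is precisely this absorption bookkeeping: one must verify that, after substituting the Chapman--Enskog bound for $\|u\|_{L^2_{\eps,\delta}}$ into the internal estimates, every occurrence of $\|U\|_{H^2_{\eps,\delta}}$ or $\|v\|_{H^2_{\eps,\delta}}$ on the right-hand side carries a factor that is $o(1)$ as $\eps\to0$ (coming either from an explicit $\eps^k$ with $k\ge1$, or from the $O(\eps)$ size of $\|\tilde U\|_4$, or from the net $\eps^2\cdot\eps^{-1}=\eps$ in the $u$-feedback), so that it can be moved to the left. The $\|v'\|_{L^2_{\eps,\delta}}$-feedback at the $H^1$ level needs the extra care that the $\eps$ in $\eps\|u\|_{L^2_{\eps,\delta}}$ from Proposition~\ref{energypropL2} is exactly what cancels the $\eps^{-1}$, leaving a bounded --- but not small --- contribution that must instead be controlled by the genuine gain in the $k=1$ internal estimate. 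Once all such terms are absorbed, what remains on the right is $C\eps^{-1}\big(\|(f,f',f'',g,g')\|_{H^2_{\eps,\delta}} + \text{lower}\big)$, which is dominated by $C\eps^{-1}(\|f\|_{H^3_{\eps,\delta}} + \|g\|_{H^2_{\eps,\delta}})$, giving \eqref{invbdH2s7}. I would also note at the end that the phase condition \eqref{phase} is exactly the normalization making the right inverse in Proposition~\ref{uprop} well-defined, so no additional freedom needs to be fixed.
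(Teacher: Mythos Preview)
Your approach is essentially the paper's: combine the Chapman--Enskog estimate from Proposition~\ref{uprop} with the internal estimates of Propositions~\ref{energypropL2} and~\ref{estHs}, and close by absorption. The one point where your exposition is looser than the paper's is the order in which you close. You substitute the CE bound $\|u\|_{L^2_{\eps,\delta}}\lesssim\eps^{-1}(\|v'\|_{L^2_{\eps,\delta}}+\cdots)$ into the $k=0$ internal estimate first, producing an $O(1)\|v'\|_{L^2_{\eps,\delta}}$ on the right, and then argue that this can be absorbed because the left side of the summed $k=0,1,2$ estimates ``controls $\|v\|_{H^2}$.'' That level-counting alone does not justify absorption: the constant in front of $\|v'\|$ on the right is a fixed $C$, not $o(1)$. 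The paper avoids this by reversing the order: it first combines the $k=0$ and $k=1$ internal estimates \emph{without} invoking CE, obtaining
\[
\|v'\|_{L^2_{\eps,\delta}}\lesssim \|\partial_x(f,f',f'',g,g')\|_{L^2_{\eps,\delta}}+\eps\|(f,f',f'',g,g')\|_{L^2_{\eps,\delta}}+\eps^2\|u\|_{L^2_{\eps,\delta}},
\]
the crucial point being that the $u$-term carries $\eps^2$ (one $\eps$ from the $k=1$ weighting, one from bounding $\eps\|U'\|$ via the $k=0$ estimate). Only then does it substitute into the CE bound, yielding $\eps\|u\|_{L^2_{\eps,\delta}}\lesssim\text{data}+\eps^2\|u\|_{L^2_{\eps,\delta}}$, which absorbs cleanly. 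This is presumably what you mean by ``the genuine gain in the $k=1$ internal estimate,'' and once stated this way the rest of your argument goes through as in the paper.
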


\begin{proof}
Going back now to \eqref{intpertu}, $u$ satisfies 
$$
\begin{aligned}
\bar b_* u'- \bar {df}_* u &=  O(|v'|+ |g| + |f'| + \eps^2 | u |  )  -   f,
\end{aligned}
$$
   If in addition  $u$ satisfies the condition \eqref{phase}
then  
\begin{equation}
\label{temp2}
\|u\|_{L^2_{\eps, \delta}}\le C   \eps^{-1} 
( \|v'\|_{L^2_{\eps, \delta} }
+ \|(f, f',g)\|_{L^2_{\eps, \delta} }     + \eps^2  \| u \|_{L^2_{\eps, \delta}} \big) . 
\end{equation}

By  Proposition~\ref{energypropL2} and Proposition~\ref{estHs} for $k = 1$, we have 
  \begin{equation} 
  \label{est77}
\big\| U'   \big\|_{L^2_{\eps, \delta} }   + \big\| v   \big\|_{L^2_{\eps, \delta} }  \le 
C    \big( \big\| (f, f', f'', g, g') \|_{L^2_{\eps, \delta} }
 + \eps  \big\| u  \big\|_{L^2_{\eps, \delta} }  \big).
\end{equation}
\begin{equation}
\label{est78}
\begin{aligned}
  \|   U'' \|_{L^2_{\eps, \delta}} + &
   \big\| v '   \big\|_{L^2_{\eps, \delta} }  \le 
   \\
& C    \big( \big\| (f', f'', f''',  g',   g'') \|_{L^2_{\eps, \delta} }
 + \eps  \big\| U'  \big\|_{L^2_{\eps, \delta} }   + 
 \eps^2   \big\| u  \big\|_{L^2_{\eps, \delta} } \big).
\end{aligned}
 \end{equation}
Combining these estimates,  
this implies 
 \begin{equation*} 
 \begin{aligned}
  \big\| v '   \big\|_{L^2_{\eps, \delta} }  & \le    
C      \big( \big\| (f', f'', f''',  g',   g'') \|_{L^2_{\eps, \delta} } + \eps \big\| (f, f', f'', g, g') \|_{L^2_{\eps, \delta} }
 + \eps^2  \big\| u  \big\|_{L^2_{\eps, \delta} }  \big)\\
& \le    
C      \big(   \eps \big\| (f, f', f'', g, g') \|_{H^1_{\eps, \delta} }
 + \eps^2  \big\| u  \big\|_{L^2_{\eps, \delta} }  \big).
 \end{aligned}
\end{equation*}
Substituting in \eqref{temp2}, yields 
$$ 
\eps \|u\|_{L^2_{\eps, \delta}} \le C \big(  \|(f, f',g )\|_{L^2_{\eps, \delta}} + 
 \eps \|(f, f',f'',g, g' )\|_{H^1_{\eps, \delta}}    
+ \eps^2 \|  u \|_{L^2_{\eps, \delta}} \big). 
$$
Hence for $\eps $ small,  
\begin{equation}\label{temp3}
\eps \|u\|_{L^2_{\eps, \delta}} \le C \big(  \|(f, f',g )\|_{L^2_{\eps, \delta}} + 
 \eps \|(f, f',f'',g, g' )\|_{H^1_{\eps, \delta}}     \big). 
\end{equation}
 
Plugging this estimate in \eqref{est77} 
 \begin{equation} 
  \label{est711}
\big\| U'   \big\|_{L^2_{\eps, \delta} }   + \big\| v   \big\|_{L^2_{\eps, \delta} } 
+  \eps  \big\| u  \big\|_{L^2_{\eps, \delta} }  \le 
C     \big\| (f, f', f'', g, g') \|_{H^1_{\eps, \delta} }.
\end{equation}
Hence, with \eqref{est78}, one has 
\begin{equation}
\label{est712}
\begin{aligned}
  \|   U'' \|_{L^2_{\eps, \delta}} + &
   \big\| v '   \big\|_{L^2_{\eps, \delta} }  \le 
   \\
& C    \big( \big\| (f', f'', f''',  g',   g'') \|_{L^2_{\eps, \delta} }
 + \eps   \big\| (f, f', f'', g, g') \|_{H^1_{\eps, \delta} }\big).
\end{aligned}
 \end{equation}
Therefore, 
 \begin{equation}
\label{est713}
 \big\| U'   \big\|_{H^1_{\eps, \delta} }          
+  \big\| v   \big\|_{L^2_{\eps, \delta} } 
+  \eps  \big\| u   \big\|_{L^2_{\eps, \delta} }  \le 
  C     \big\| f, f', f'', g, g'  \big\|_{H^1_{\eps, \delta} }  
\end{equation}
The left hand side dominates 
$$
  \big\| U'   \big\|_{H^1_{\eps, \delta} }  + \eps  \big\| U'   \big\|_{L^2_{\eps, \delta} } 
  =  \eps \big\| U'   \big\|_{H^2_{\eps, \delta} } 
  $$
and the right hand side is smaller than or equal to  
$  \big\|  f  \big\|_{H^2_{\eps, \delta} } +  \big\| g   \big\|_{H^1_{\eps, \delta} } $.
The estimate \eqref{invbdH2s7}  follows.   
\end{proof}

Knowing a bound  for $\| u \|_{L^2_{\eps, \delta}}$, Proposition~\ref{estHs} implies by induction the following final result. 

\begin{prop}\label{prop73}
There are  constants $C$,  $\eps_0 > 0 $ and  $\delta_0 > 0$   
and  for  $s \ge 3$  there is a constant $C_s$
such that 
for $\eps \in ]0, \eps_0]$, $\delta \in [0, \delta_0]$,     
 $f \in H^{s+1}_{\eps, \delta} $, $g \in H^{s}_{\eps, \delta} $,
$\tilde U \in H^{s+1}_{\eps, \delta}$,
and $U \in H^s_{\eps, \delta}$ satisfying  
  \eqref{linpert}, \eqref{phasecond}, and \eqref{cond1first},
one has 
 \begin{equation}\label{invbdHs7}
\begin{aligned}
\big\|  U   \big\|_{H^s_{\eps, \delta} }&\le 
C \eps^{-1}\big( 
\big\| \tilde U \|_{H^{s+1}_{\eps, \delta} }
\big| F |_{{s_0+2}}
+
\big\| F \|_{{s+1}}\big)\\
&=
C \eps^{-1}\big( 
\big\| \tilde U \|_{H^{s+1}_{\eps, \delta} }
(\big\| f \|_{H^{s_0+3}_{\eps, \delta} }
+
\big\| g \|_{H^{s_0+2}_{\eps, \delta} })
+
(\big\| F \|_{H^{s+2}_{\eps, \delta} }
+\big\| g \|_{H^{s+1}_{\eps, \delta} }  \big)),
\\
\end{aligned}
\end{equation}
 
\end{prop}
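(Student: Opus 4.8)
\textbf{Plan of proof of Proposition \ref{prop73}.}
The strategy is to close the estimates by an induction on $s$, starting from the base case $s=2$, which is already established in Proposition \ref{prop72} (indeed \eqref{invbdH2s7} is exactly \eqref{invbdHs7} for $s=2$, up to adjusting constants, since the $\tilde U$-dependent term on the right of \eqref{invbdHs7} is of higher order in $\eps$ and may be absorbed). For the inductive step, suppose \eqref{invbdHs7} holds at level $s-1$; I want to deduce it at level $s$. The key input is Proposition \ref{estHs} applied with $k=s$, which controls $\|\partial_x^s U'\|_{L^2_{\eps,\delta}}+\|\partial_x^s v\|_{L^2_{\eps,\delta}}$ in terms of $\partial_x^s(f,f',f'',g,g')$, plus lower-order terms $\eps^s C_s(\|U'\|_{H^{s-1}_{\eps,\delta}}+\eps\|v\|_{H^{s-1}_{\eps,\delta}}+\eps\|u\|_{L^2_{\eps,\delta}})$ and the $\tilde U$-dependent remainder $C_s\eps^{s+1}\|\tilde U\|_{H^{s+2}_{\eps,\delta}}(\|v\|_{H^1_{\eps,\delta}}+\eps\|U\|_{H^2_{\eps,\delta}})$.

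First I would observe that, by the inductive hypothesis together with Proposition \ref{prop72}, the quantities $\|U'\|_{H^{s-1}_{\eps,\delta}}$, $\|v\|_{H^{s-1}_{\eps,\delta}}$, and $\eps\|u\|_{L^2_{\eps,\delta}}$ are all bounded by $C\eps^{-1}\big(\|\tilde U\|_{H^s_{\eps,\delta}}|F|_{s_0+2}+\|F\|_s\big)$ (using that $\|U\|_{H^{s-1}_{\eps,\delta}}$ is essentially $\eps^{-1}\|U'\|_{H^{s-2}_{\eps,\delta}}+\eps^{-1}\|v\|_{\dots}+\|u\|_{L^2}$-type combinations in the scaled norm). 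Plugging these into the lower-order terms of \eqref{sharph2eq} produces contributions of size $\eps^{s-1}\big(\|\tilde U\|_{H^s}|F|_{s_0+2}+\|F\|_s\big)$, which for $s\ge 3$ and $\eps$ small are dominated by the target right-hand side $C\eps^{-1}\big(\|\tilde U\|_{H^{s+1}_{\eps,\delta}}|F|_{s_0+2}+\|F\|_{s+1}\big)$; in particular the positive powers of $\eps$ give room to spare. Similarly the $\tilde U$-remainder term is handled by bounding $\|v\|_{H^1_{\eps,\delta}}$ and $\eps\|U\|_{H^2_{\eps,\delta}}$ via Proposition \ref{prop72}, so that it becomes $\eps^{s+1}\|\tilde U\|_{H^{s+2}_{\eps,\delta}}\cdot\eps^{-1}(\|f\|_{H^3_{\eps,\delta}}+\|g\|_{H^2_{\eps,\delta}})$; recognizing $\|\tilde U\|_{H^{s+2}_{\eps,\delta}}$ is not directly available, I would instead note that the hypotheses of the proposition only grant $\tilde U\in H^{s+1}_{\eps,\delta}$, so one must interpolate or, more simply, reindex: apply Proposition \ref{estHs} at order $k=s-1$ rather than $s$ so that only $\|\tilde U\|_{H^{s+1}_{\eps,\delta}}$ appears, and recover the top derivative of $U$ directly from the differentiated equation \eqref{Dapriorieq} at order $k=s$, where the coefficient bounds \eqref{coeffests}--\eqref{ecoeffests} make the loss harmless. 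This gives exactly the $\|\tilde U\|_{H^{s+1}_{\eps,\delta}}|F|_{s_0+2}$ structure on the right of \eqref{invbdHs7}.

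Collecting the top-order bound with the inductive control on all lower derivatives, and converting back from the $L^2_{\eps,\delta}$-of-derivatives language to the $H^s_{\eps,\delta}$ norm via the definition \eqref{defwnorm} (so that $\|U\|_{H^s_{\eps,\delta}}\sim \eps^{1/2}\sum_{k\le s}\eps^{-k}\|e^{\delta\eps\langle x\rangle}\partial_x^k U\|_{L^2}$ and the algebra with the $\eps^{-1}$ gain from Proposition \ref{uprop} works out), yields \eqref{invbdHs7} at level $s$, completing the induction. The main obstacle — and the place that requires the care described above — is the derivative/power-of-$\eps$ bookkeeping in the $\tilde U$-dependent remainder: one must route the estimate so that the top derivative of $\tilde U$ that appears is $\partial_x^{s+1}$ and no worse, matching precisely the regularity the proposition assumes, and so that all stray positive powers of $\eps$ in the error terms are genuinely summable against the $\eps^{-1}$ growth of the solution operator; everything else is a routine application of Moser's inequality, the coefficient bounds \eqref{coeffests}--\eqref{ecoeffests}, and absorption of highest-order terms, exactly as in the semilinear argument of \cite{MZ2}.
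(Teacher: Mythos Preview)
Your proposal is correct and follows the same approach as the paper, which simply states that Proposition \ref{prop73} follows from Proposition \ref{estHs} by induction once the bound on $\|u\|_{L^2_{\eps,\delta}}$ from Proposition \ref{prop72} is in hand. One small cleanup: since $\partial_x^{k}U'=\partial_x^{k+1}U$, applying Proposition \ref{estHs} at $k=s-1$ already delivers the top derivative $\partial_x^{s}U$ with exactly $\|\tilde U\|_{H^{s+1}_{\eps,\delta}}$ on the right, so your initial choice $k=s$ and the subsequent detour through \eqref{Dapriorieq} are unnecessary---the ``reindexing'' you propose is in fact the natural application, not a workaround.
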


\begin{rem}\label{whynm}
\textup{
The loss of derivative on $\tilde U$ comes from
the conservative form of the linearized equations,
through the microscopic energy estimates on the solution.
A similar loss in derivative may be seen
in the resolvent equation for
linear hyperbolic equations in conservative form,
$\lambda U+ (A(\tilde U)u)'=f$;
see \cite{TZ} for further discussion.
We could avoid this by writing the differentiated
equations in quasilinear form, but this would
prevent us from integrating back to carry out linearized
Chapman--Enskog estimates.
That is, the loss of derivatives is due to a subtle incompatibility
between the integrated form needed for linearized Chapman--Enskog estimates
and the nonconservative (quasilinear) form  needed
for optimal energy estimates with no loss of derivative.
}
\end{rem}
%

\section{Existence for the linearized problem}\label{existence}
To complete the proof of Proposition \ref{invprop}, it remains
to demonstrate existence for the linearized problem.
This can be carried out as in \cite{MZ2} by the vanishing viscosity
method, with viscosity coefficient $\eta>0$, obtaining
existence for each positive $\eta$ by standard boundary-value theory,
and noting that our previous A Priori bounds 
\eqref{invbdHs7} persist under regularization for
sufficiently small viscosity $\eta>0$, so that we can obtain
a weak solution in the limit by extracting a weakly convergent
subsequence.
We omit these details, referring the reader to
Section 8, \cite{MZ2}.
The asserted estimates then follow in the limit by continuity.

\appendix
\section{A Nash--Moser Theorem with losses}\label{NM}


For completeness, we give in this appendix the parameter-dependent
Nash--Moser theory developed in \cite{TZ}, specialized for clarity
to the present, Hilbert space, setting.
The main novelty of this treatment is to allow losses
of powers of the parameter $\eps\to 0$ in the linearized solution operator.
For a proof of this result, see \cite{TZ};
for a more general discussion of Nash--Moser iteration methods,
see \cite{H,AG,XSR}, and references therein.


 Consider two families of Banach spaces $\{E_s\}_{s \in \R},$ $\{F_s\}_{s \in \R},$ and a family of equations
 \begin{equation} \label{0}
  \Phi^\e(u^\e) = 0, \qquad u^\e \in E_s,
 \end{equation}
 indexed by $\e \in (0,1),$ where for all $\e,$ 
 \begin{equation} \label{0.1} \Phi^\e \in C^2(E_s,F_{s-m}), \qquad \mbox{ for all $s \leq \bar s,$}\end{equation}
 for some $ m \geq0$ and some $\bar s \in \R.$
 
  Let $| \cdot |_s$ denote the norm
in $E_s$ and $\| \cdot \|_s$ denote the norm in $F_s.$ 
The norms $|\cdot|_s$ and $\| \cdot\|_s$ may be $\e$-dependent
(as in our application here).
We assume that the embeddings
   \begin{equation} \label{sp0} E_{s'} \hookrightarrow E_{s}, \qquad F_{s'} \hookrightarrow F_{s}, \qquad s \leq s',\end{equation}
   hold, and have norms less than one:
   \begin{equation} \label{sp1} |\cdot|_s \leq |\cdot |_{s'}, \quad \| \cdot \|_s \leq \|\cdot \|_{s'}, \qquad s \leq s'.\end{equation}
  We assume the interpolation property\footnote{In \eqref{sp2} and below, $| u |_s \lesssim | v|_{s'}$ stands for $| u|_s \leq C |v|_{s'},$ for some $C > 0$ depending on $s$ and $s'$ but not on $\e,$ nor on $u$ and $v.$}:
   \begin{equation} \label{sp2}
    | \cdot|_{s + \s} \lesssim |\cdot|_{s}^{\frac{\s' - \s}{\s'}} |\cdot|_{s + \s'}^{\frac{\s}{\s'}}, \qquad 0 < \s < \s'.
   \end{equation}
 We assume in addition the existence of a family of regularizing operators
  $$S_\theta: \quad E_s \to E_s, \qquad \theta  > 0,$$
   such that for all $s \leq s,',$
 \begin{equation} \label{3}
   | S_\theta u - u |_{s} \lesssim \theta^{s - s'} | u |_{s'}.
 \end{equation}
\begin{equation} \label{4}
   | S_\theta u |_{s'} \lesssim \theta^{s' - s} | u |_{s}.
 \end{equation}

   \begin{ass} \label{ass1} For some $s_0 \in \R,$ some $\g_0 \geq 0,$ for all $s$ such that 
 $$s_0 + m \leq s + m \leq \bar s,$$
   for all $u,v,w \in E_{s+m},$
  \begin{equation} \label{1}
  \| \Phi^\e(u)\|_s \leq C_0 (1 + | u |_{s+m} + |u|_{s_0 + m}|u|_s); 
  \end{equation} 
  \begin{equation}
  \| (\Phi^\e)'(u) \cdot v\|_s  \leq  C_0 (|v|_{s +m} + |v|_{s_0 + m} |u|_{s+m}), \label{1'} 
  \end{equation}
  and
  \begin{equation} \begin{aligned}
  \label{1''} \| (\Phi^\e)''(u) \cdot (v, w)\|_s \leq C_0 & \big(|v|_{s_0 + m} |w|_{s + m} + |v|_{s + m} |w|_{s_0 + m} \\ & + |u|_{s + m}|v|_{s_0 + m} |w|_{s_0 + m}\big)
  \end{aligned}\end{equation}
    where $C_0 = C_0(\e, |u|_{s_0 + m})$ satisfies
 \begin{equation} \label{supc012}
  \sup_{\e} \sup_{|u|_{s_0 + m} \lesssim \e^{\gamma_0}} C_0  < + \infty.
  \end{equation} 
   \end{ass}

\begin{ass} \label{ass2} 
For some $\gamma \geq 0, r \geq 0, r' \geq 0,$ for all $s$ such that
 \begin{equation} \label{ass-s-0}
  s_0 + m + \max(r,r') \leq s + \max(r,r') \leq \bar s,
  \end{equation}
   for all $u \in E_{s + r}$ such that
\begin{equation} \label{cond1}
 | u |_{s_0 + m} \lesssim \e^{\gamma},
 \end{equation}
 the map $(\Phi^\e)'(u): E_{s+m} \to F_s$ has a right inverse $\Psi^\e(u):$
  $$ (\Phi^\e)'(u) \Psi^\e(u) = {\rm Id}: \quad F_s \to F_s,$$
   satisfying, for all $\phi \in F_{s+r'},$
 \begin{equation} \label{2}
  | \Psi^\e(u) \phi|_s \leq \e^{-1} C( \|\phi\|_{s_0 + m + r'} | u|_{s+r} + \|\phi\|_{s + r'}),
 \end{equation}
 where $C$ is a non-decreasing function of its arguments $s$ and $|u|_{s_0 + m + r}.$ 
\end{ass}

\begin{ass} \label{ass-wkb} There holds the bound 
\begin{equation} \label{wkb3}
 \|\Phi^\e(0)\|_{s} \lesssim \e^{k}, \end{equation}
 for some $k$ and $s$ satisfying
 \begin{equation} \label{ass-wkb-cond-k}
  \max(2, 1+ \g_0, 1 + \g) < k,
  \end{equation}
  \begin{equation} \label{ass-bar-s}
  C(k) \leq \bar s - s_0 - m,
  \end{equation}
 where $C(k)$ is a certain positive function (see \cite{TZ})
and $ s \in [s_0 + m, \bar s - C(k)].$
  \end{ass}

  \begin{theo}[Existence] \label{th1} Under Assumptions {\rm \ref{ass1},} {\rm \ref{ass2}} and {\rm \ref{ass-wkb}}, for $\e$ small enough, there exists a real sequence $\theta_j^\e,$ satisfying $\theta_j^\e \to +\infty$ as $j \to +\infty$ and $\e$ is held fixed, such that the sequence
 $$ u^\e_0 := 0, \qquad u^\e_{j+1} := u_j^\e + S_{\theta_j^\e} v_j^\e, \qquad v_j^\e := - \Psi^\e(u_j^\e) \Phi^\e(u_j^\e),$$
  is well defined and converges, as $j \to \infty$ and $\e$ is held fixed, to a solution $u^\e$ of \eqref{0} 
in $s +m$ norm, which satisfies the bound
 \begin{equation} \label{est-th}
 | u^\e |_{s} \lesssim \e^{k -1}. 
 \end{equation}
\end{theo}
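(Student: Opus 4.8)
The plan is to run a Newton iteration with smoothing, in the spirit of the H\"ormander version of the Nash--Moser scheme (see \cite{H,AG}), the one novel ingredient being careful bookkeeping of the powers of $\e$ lost at each linear inversion. Fix $s$ as in Assumption \ref{ass-wkb}, a fixed \emph{low} index (of the form $s_0+m+\max(r,r')$, the smallest index at which all of \eqref{1}--\eqref{1''} and the solvability estimate \eqref{2} can be invoked) and the \emph{high} index $\bar s$, which by \eqref{ass-bar-s} lies far enough above the low one. I would set $u_0^\e:=0$, $v_j^\e:=-\Psi^\e(u_j^\e)\Phi^\e(u_j^\e)$, and $u_{j+1}^\e:=u_j^\e+S_{\theta_j^\e}v_j^\e$, for an increasing sequence $\theta_j^\e\to+\infty$; the classical choice $\theta_j^\e:=(\theta_0^\e)^{\chi^j}$ with a fixed $\chi\in(1,2)$ works, the base point $\theta_0^\e$ being taken as a small negative power of $\e$, $\theta_0^\e\sim\e^{-a}$, so that the first correction $|v_0^\e|\lesssim\e^{-1}\|\Phi^\e(0)\|\lesssim\e^{k-1}$ is small by \eqref{ass-wkb-cond-k}.

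The engine of the scheme is the usual error decomposition. Since $(\Phi^\e)'(u_j^\e)v_j^\e=-\Phi^\e(u_j^\e)$, Taylor expansion of $\Phi^\e$ at $u_j^\e$ gives
\[
\Phi^\e(u_{j+1}^\e)=\underbrace{(\Phi^\e)'(u_j^\e)\big(S_{\theta_j^\e}-\mathrm{Id}\big)v_j^\e}_{=:\,e_j'}+\underbrace{\int_0^1(1-t)\,(\Phi^\e)''\big(u_j^\e+tS_{\theta_j^\e}v_j^\e\big)\big(S_{\theta_j^\e}v_j^\e,\,S_{\theta_j^\e}v_j^\e\big)\,dt}_{=:\,e_j''},
\]
so that $v_{j+1}^\e=-\Psi^\e(u_{j+1}^\e)(e_j'+e_j'')$. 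The smoothing error $e_j'$ is estimated from the regularization bound \eqref{3}, the tame bound \eqref{1'} and the interpolation inequality \eqref{sp2}; the quadratic error $e_j''$ from \eqref{4}, the tame bound \eqref{1''} and \eqref{sp2}. In both cases one bounds the quantity by a low-order norm of $v_j^\e$ times a high-order norm carrying a positive power of $\theta_j^\e$ --- the quadratic gain that makes Newton converge.

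I would then prove by induction on $j$ three families of estimates: (a) a bound on $|u_j^\e|$ in the low norm by $\e^{\gamma}$ (and by $\e^{\gamma_0}$), keeping Assumptions \ref{ass1} and \ref{ass2} applicable with $\e$-uniform constants via \eqref{supc012}; (b) a low-norm decay bound $|v_j^\e|\lesssim\e^{-1}\Delta_j$ with $\Delta_j$ doubly-exponentially small, $\Delta_{j+1}$ being dominated by $\e^{-1}(\theta_j^\e)^{a'}\Delta_j^2$ plus strictly smaller terms; (c) a high-norm bound $|v_j^\e|_{\bar s}\lesssim\e^{-1}(\theta_j^\e)^{b}$. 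Inserting the decomposition above into \eqref{2} closes (b)--(c), while summing the increments $|u_{j+1}^\e-u_j^\e|=|S_{\theta_j^\e}v_j^\e|\lesssim\e^{-1}\Delta_j\lesssim\e^{k-1}$ in the low norm keeps (a) alive once $\e$ is small, using $k-1>\gamma,\gamma_0$ from \eqref{ass-wkb-cond-k}; hypothesis \eqref{ass-bar-s} is what leaves enough Sobolev room between the low index and $\bar s$ to absorb the interpolation losses. Convergence and the final bound are then routine: (b) makes $\sum_j|u_{j+1}^\e-u_j^\e|_{s+m}$ summable, so $u_j^\e\to u^\e$ in $E_{s+m}$, (c) controls the high-norm bounds, and $\|\Phi^\e(u_j^\e)\|_s=\|e_{j-1}'+e_{j-1}''\|_s\to0$ gives $\Phi^\e(u^\e)=0$ by $C^2$-continuity of $\Phi^\e$; summing the increments in $E_s$ and using $\Delta_0\sim\e^k$ yields $|u^\e|_s\lesssim\e^{-1}\Delta_0\sim\e^{k-1}$, which is \eqref{est-th}.

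The main obstacle --- and the reason this is not a routine Nash--Moser argument --- is the interplay between the loss of a factor $\e^{-1}$ at every step and the speed of the iteration: one accumulates one power $\e^{-1}$ per iteration, so one must verify that the doubly-exponential Newton decay of $\Delta_j$ not only sends $\Delta_j\to0$ but makes $\sum_j\e^{-1}\Delta_j=O(\e^{k-1})$. This is what forces the quantitative choices $\theta_0^\e\sim\e^{-a}$ and $\chi\in(1,2)$, and is precisely where the strict inequality $k>\max(2,1+\gamma_0,1+\gamma)$ of \eqref{ass-wkb-cond-k} enters. The companion difficulty is propagating (a): one must keep $|u_j^\e|$ in the low norm below $\e^{\gamma}$ (resp.\ $\e^{\gamma_0}$) at every step so that the tame and solvability hypotheses remain available with $\e$-independent constants, which again rests on the gap between $k-1$ and the exponents $\gamma,\gamma_0$. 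A complete proof is given in \cite{TZ}.
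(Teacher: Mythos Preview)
The paper does not actually prove this theorem: it is stated in the appendix with the remark ``For a proof of this result, see \cite{TZ}'', and no argument is given. Your proposal is therefore more detailed than what the paper provides, and your sketch of the H\"ormander-style smoothed Newton scheme --- the error decomposition $\Phi^\e(u_{j+1}^\e)=e_j'+e_j''$, the inductive low/high norm bounds, the choice $\theta_j^\e=(\theta_0^\e)^{\chi^j}$ with $\theta_0^\e\sim\e^{-a}$, and the identification of the accumulated $\e^{-1}$ losses as the place where \eqref{ass-wkb-cond-k} is needed --- is a correct outline of the argument in \cite{TZ} and consistent with the paper's own closing reference to that source.
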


 \begin{theo}[Uniqueness] \label{th2}
Under Assumptions {\rm \ref{ass1},} {\rm \ref{ass2}} and {\rm \ref{ass-wkb}}, 
for $\e$ small enough, 
if $(\Phi^\e)'$ is invertible, i.e., $\Psi^\e$ is also a left inverse, then
the solution described in Thm \ref{th1} is unique in a ball of radius 
$o(\eps^{\max(1,\gamma_0,\gamma)})$ in 
$s_0 +2m + r'$ norm.
More generally, if $\hat u^\e$ is a second solution within this ball, then
$(\hat u^\e- u^\e)$ is approximately tangent to $\kernel (\Phi^\e)'(u^\e)$, in the
sense that its distance in $s_0$ norm from $\kernel (\Phi^\e)'(u^\e)$ is
$o(|\hat u^\e-u^\e|_{s_0 })$.
In particular, if $\kernel (\Phi^\e)'(u^\e)$ is finite-dimensional,
then  $u$ is the unique solution in the ball satisfying the additional
``phase condition''
\be\label{phasecond}
\Pi_{\kernel (\Phi^\e)'(u^\e)} (\hat u^\e-u^\e)=0,
\ee
where 
$\Pi_{\kernel (\Phi^\e)'(u^\e)}$ is any uniformly bounded projection 
onto $\kernel (\Phi^\e)'(u^\e)$ 
(in a Hilbert space, any orthogonal projection onto $\kernel (\Phi^\e)'(u^\e)$).
\end{theo}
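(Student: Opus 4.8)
The plan is to reduce the problem to a \emph{quadratic} equation for the difference $w := \hat u^\e - u^\e$ of two solutions and then exploit the right inverse $\Psi^\e(u^\e)$ supplied by Assumption \ref{ass2}. Since $\Phi^\e(\hat u^\e)=\Phi^\e(u^\e)=0$, a second-order Taylor expansion gives
\[
0 = (\Phi^\e)'(u^\e)\,w + R(w),\qquad R(w):=\int_0^1 (1-t)\,(\Phi^\e)''\big(u^\e+tw\big)(w,w)\,dt,
\]
so that $R(w)$ is controlled by \eqref{1''}: in any admissible norm $\|R(w)\|_s \lesssim |w|_{s_0+m}|w|_{s+m} + |u^\e+tw|_{s+m}|w|_{s_0+m}^2$, the coefficient being uniform because $|u^\e|_{s_0+m}\lesssim \e^{k-1}$ by \eqref{est-th} and $|w|_{s_0+m}$ is small in the ball. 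Applying $\Psi^\e(u^\e)$ and setting $P:=\Psi^\e(u^\e)(\Phi^\e)'(u^\e)$, one gets $Pw = -\Psi^\e(u^\e)R(w)$. From $(\Phi^\e)'(u^\e)\Psi^\e(u^\e)=\mathrm{Id}$ it follows that $P$ is a (bounded, from the strong space into $E_{s_0}$) projection with $\kernel P = \kernel (\Phi^\e)'(u^\e)$ and range $\Range\,\Psi^\e(u^\e)$; hence $(\mathrm{Id}-P)w\in\kernel(\Phi^\e)'(u^\e)$, and $Pw$ is the ``transverse'' part of $w$.

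Next I would insert $\phi=R(w)$ into \eqref{2} with $s=s_0$. Bookkeeping the losses, the highest norm of $w$ that appears is $|w|_{s_0+2m+r'}$ (one $m$ from $(\Phi^\e)''$, the other together with $r'$ from \eqref{2}), every remaining factor being of the form $|w|_{s_0+m}$ or $|w|_{s_0+m+r'}$ and hence bounded by the strong ball-norm; combining with $|u^\e|_{s_0+r}\lesssim \e^{k-1}$ and $k>\max(2,1+\gamma_0,1+\gamma)$ yields the quadratic estimate
\[
|Pw|_{s_0}\ \le\ C\,\e^{-1}\,|w|_{s_0+2m+r'}^{2}.
\]
This is exactly why the uniqueness ball is measured in the $s_0+2m+r'$ norm with radius $o(\e^{\max(1,\gamma_0,\gamma)})$: since $\max(1,\gamma_0,\gamma)\ge1$, smallness of $|w|_{s_0+2m+r'}$ forces $\e^{-1}|w|_{s_0+2m+r'}=o(1)$, so that $|Pw|_{s_0}=o(1)\,|w|_{s_0+2m+r'}$, and after the interpolation inequality \eqref{sp2} relating the intermediate norms of $w$ to $|w|_{s_0}$ and $|w|_{s_0+2m+r'}$ one obtains $|Pw|_{s_0}=o(|w|_{s_0})$. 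Since $(\mathrm{Id}-P)w\in\kernel(\Phi^\e)'(u^\e)$, this says $\mathrm{dist}_{s_0}\big(w,\kernel(\Phi^\e)'(u^\e)\big)\le |Pw|_{s_0}=o(|w|_{s_0})$, which is precisely the ``approximately tangent to the kernel'' assertion.

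For the two remaining claims: if $(\Phi^\e)'(u^\e)$ is invertible then $P=\mathrm{Id}$, so $w=Pw=-\Psi^\e(u^\e)R(w)$; running the Nash--Moser absorption argument of \cite{TZ} now in the strong norm closes the loop and forces $w=0$ for $\e$ small, giving uniqueness in the stated ball. If instead $\kernel(\Phi^\e)'(u^\e)$ is finite-dimensional and $\Pi:=\Pi_{\kernel(\Phi^\e)'(u^\e)}$ is a uniformly bounded projection onto it, the phase condition \eqref{phasecond} reads $\Pi w=0$; combined with $(\mathrm{Id}-P)w\in\kernel(\Phi^\e)'(u^\e)$ this gives $(\mathrm{Id}-P)w=\Pi(\mathrm{Id}-P)w=\Pi w-\Pi Pw=-\Pi Pw$, whence $|w|_{s_0}\le (1+\|\Pi\|)\,|Pw|_{s_0}\lesssim \e^{-1}|w|_{s_0+2m+r'}^2$, and the same absorption forces $w=0$; the orthogonal-projection case in Hilbert space is a special instance. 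The main obstacle throughout is the loss of one power of $\e$ together with derivatives in $\Psi^\e$: it is this that dictates the strong norm $s_0+2m+r'$ and the threshold radius $o(\e^{\max(1,\gamma_0,\gamma)})$, and it is the passage from the weak-norm quadratic estimate to a closed estimate in the strong norm — the delicate step carried out in \cite{TZ} — that upgrades ``approximately tangent'' to genuine uniqueness.
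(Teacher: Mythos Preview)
The paper does not prove this theorem: Appendix \ref{NM} reproduces the statements from \cite{TZ} and explicitly says ``For a proof of this result, see \cite{TZ}.'' So there is no in-paper proof to compare against.

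Your outline is the standard one and is correct in spirit: Taylor-expand $\Phi^\e(\hat u^\e)-\Phi^\e(u^\e)=0$ to get $(\Phi^\e)'(u^\e)w=-R(w)$ with $R$ quadratic, apply the right inverse $\Psi^\e(u^\e)$, observe that $P:=\Psi^\e(u^\e)(\Phi^\e)'(u^\e)$ satisfies $P^2=P$ and $(\mathrm{Id}-P)w\in\kernel(\Phi^\e)'(u^\e)$, then estimate $|Pw|_{s_0}$ via \eqref{2} and \eqref{1''}. Your bookkeeping of the norm $s_0+2m+r'$ and the factor $\e^{-1}$ is right, and your reduction of the phase-condition case via $(\mathrm{Id}-P)w=-\Pi Pw$ is clean. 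Two small points: (i) $P$ is not literally a bounded projection on a single space because of the derivative loss, but you only use $(\mathrm{Id}-P)w\in\kernel(\Phi^\e)'$ and the bound on $|Pw|_{s_0}$, which are fine; (ii) the passage from $|Pw|_{s_0}=o(|w|_{s_0+2m+r'})$ to $o(|w|_{s_0})$ via interpolation is a bit glib as written---this is exactly the ``delicate step'' you flag, and you are right to defer it to \cite{TZ}, just as the paper does. In short, your proposal matches the intended argument and correctly isolates where the real work (in \cite{TZ}) lies.
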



\end{document}